\documentclass[preprint,12pt]{elsarticle}

\usepackage{amsmath,amssymb,amsthm}
\usepackage{tikz}
\usepackage{url}
\usepackage{hyperref}
\journal{Discrete Mathematics}

\newtheorem{theorem}{Theorem}
\newtheorem{problem}{Problem}
\newtheorem{lemma}{Lemma}
\newtheorem{proposition}{Proposition}
\newtheorem{corollary}{Corollary}
\theoremstyle{definition}
\newtheorem{definition}{Definition}
\theoremstyle{remark}
\newtheorem{remark}{Remark}

\begin{document}

\begin{frontmatter}

\title{On the Packing Coloring Gap of Graphs\tnoteref{t1}}
\tnotetext[t1]{This work is supported by the Scientific and Technological Research Council of Türkiye (TÜBİTAK) under Grant No.~124F114.}

\author{
Batoul Tarhini \quad Didem Gözüpek
\\[1ex]
\texttt{ Emails:\{batoultarhini, didem.gozupek\}@gtu.edu.tr }
}
\affiliation{organization={Gebze Technical University, Department of Computer Engineering},
            addressline={}, 
            city={Gebze},
            postcode={41400}, 
            state={Kocaeli},
            country={Türkiye}}

\begin{abstract}
The packing chromatic number of a graph is the minimum number of colors
for which the graph admits a packing coloring.
This distance-based parameter may change under local
structural modifications of the graph. In this paper, we introduce the \emph{packing coloring gap}, defined as the maximum decrease in the packing chromatic number caused by the deletion of a single vertex. We focus on trees and determine the packing coloring gap for caterpillars.
We further extend these results to caterpillars under the corona operation with $K_1$.
In addition, we present examples of graphs with packing coloring gap
zero, one, and arbitrarily large.
\end{abstract}

\begin{keyword}
Packing coloring \sep Packing chromatic number \sep Packing coloring gap
\sep Vertex deletion \sep Caterpillars 
\end{keyword}

\end{frontmatter}

\section{Introduction}

All graphs considered in this paper are finite, simple, and connected.
For a graph $G$, we denote by $V(G)$ and $E(G)$ its vertex and edge sets,
respectively. For vertices $u,v\in V(G)$, the distance $d_G(u,v)$ between $u$ and $v$ is defined
as the length of a shortest $u$--$v$ path in $G$. For a vertex $v\in V(G)$,
the graph obtained by deleting $v$ is denoted by $G-v$.

A \emph{packing coloring} of a graph $G$ is a vertex coloring
$c : V(G) \rightarrow \{1,2,\dots,k\}$ such that for every color $i \in \{1,2,\dots,k\}$, any two distinct vertices $u,v \in V(G)$ with $c(u)=c(v)=i$ satisfy
$d_G(u,v) > i$.
The minimum number $k$ of colors required in a packing coloring of $G$ is
called the \emph{packing chromatic number} of $G$ and is denoted by
$\chi_p(G)$.

The packing chromatic number was originally introduced under the name
\emph{broadcast chromatic number} by Goddard et al.~\cite{Goddard2008}
as a distance-based variant of classical vertex coloring, motivated
in part by frequency assignment problems. Since then, packing colorings have been extensively studied from both
structural and algorithmic perspectives. Numerous results establish bounds for $\chi_p(G)$ on specific graph
classes, including trees, grids, and cubic graphs; see, for example,
\cite{Bresar2011,Klavzar2015}.
On the algorithmic side, determining $\chi_p(G)$ is known to be
computationally difficult, and the problem remains NP-complete even
for restricted classes such as trees.
Since the definition of a packing coloring explicitly depends on
distances in the graph, local structural modifications such as vertex
deletion may significantly affect the packing chromatic number.
Consequently, the behavior of $\chi_p(G)$ under vertex deletion has
received considerable attention;  see, for example,
\cite{Bresar2011,Klavzar2015}.

A graph $G$ is called \emph{packing chromatic vertex-critical}
(or simply \emph{$\chi_p$-critical}) if
$\chi_p(G-v) < \chi_p(G)$ for every vertex $v \in V(G)$.
This notion was systematically studied by Klav\v{z}ar and Rall
in \cite{KlavzarRall2019}, where the authors introduced a refined
framework for measuring vertex criticality in packing colorings. 
Furthermore, they defined the set
\[
\Delta \chi_\rho(G)
= \{ \chi_p(G) - \chi_p(G-x) : x \in V(G) \},
\]
which records all possible decreases of the packing chromatic number
resulting from the deletion of a single vertex.
The notion of vertex-criticality is inherently qualitative:
it asks whether the deletion of \emph{every} vertex strictly reduces
$\chi_p(G)$.
In contrast, the set $\Delta \chi_p(G)$ captures the range of possible
decreases; however, it does not isolate the extremal behavior of vertex deletion.
Motivated by this approach, we introduce a parameter that captures the
\emph{maximum} influence of a single vertex on the packing chromatic
number.
\begin{definition}
For a graph $G$, the \emph{packing coloring gap} of $G$ is defined as
\[
\mu_p(G)
= \max_{v \in V(G)} \bigl( \chi_p(G) - \chi_p(G-v) \bigr).
\]
\end{definition}

Clearly, $\mu_p(G) = \max \Delta \chi_p(G)$, and hence this parameter
naturally extends the framework introduced in
\cite{KlavzarRall2019}.
While $\chi_p$-critical graphs correspond to those for which
$\Delta \chi_p(G) \subseteq \mathbb{N}^+$, the value of $\mu_p(G)$
provides a quantitative measure of vertex criticality.
Large values of $\mu_p(G)$ indicate the presence of vertices whose
removal causes a substantial relaxation of the distance constraints
imposed by a packing coloring.

\medskip 
Studying packing chromatic vertex-criticality and the packing coloring gap is important because it highlights vertices that have a significant influence on distance constraints in the graph. Understanding which vertices cause significant decreases in $\chi_p(G)$
 can inform both theoretical and practical applications, such as network design, resource allocation, and frequency assignment, where maintaining distance-based constraints is crucial.

\medskip
The variability of the packing coloring gap motivates the study of graphs for which $\mu_p(G)$ is small or bounded. 
In this paper, we first present explicit examples of graphs with packing coloring gaps $0$ or $1$, 
as well as examples of graphs with unbounded gaps. 
We then focus on graphs with small or bounded packing coloring gaps. 
To this end, we begin with trees, starting with specific classes such as caterpillars. We show that, for every caterpillar $G$, the
packing coloring gap satisfies $\mu_p(G)\le 2$. Caterpillars constitute a natural starting point, since their packing
chromatic number has been studied in \cite{Sloper2004} and is known to be bounded by $7$. Moreover, the authors in \cite{KlavzarRall2019}, considered the class of caterpillars and
proved that for every $k \in \{1,\dots,7\}$, there exists a $\chi_p$-critical
caterpillar with packing chromatic number $k$. In the same spirit, we show that for every $k \in \{1,\dots,7\}$, there exists a caterpillar $T$
such that $\chi_p(T)=k$ and $\mu_p(T)=1$.
We then extend our investigation to more general tree structures, such as
lobsters, focusing in particular on caterpillars under the corona
operation with $K_1$. For this class, we first establish an upper bound on the packing
chromatic number (at most 7) and then show that the packing coloring gap remains
bounded by 2.
Our results provide insights into how local graph structure influences the packing coloring gap.

\section{Preliminaries}

We present several basic observations on the packing coloring gap.
\begin{remark}
For every graph $G$, we have $\mu_p(G)\ge 0$. Moreover, $\mu_p(G)=0$ if and only if
\[
\chi_p(G-v)=\chi_p(G) \quad \text{for all } v\in V(G).
\]
\end{remark}

\begin{lemma}\label{lem:basic-gap}
For every graph $G$ and every vertex $v\in V(G)$,
\[
\chi_p(G-v) \ge \chi_p(G)-\mu_p(G).
\]
\end{lemma}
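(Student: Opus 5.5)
The inequality follows directly from the definition of $\mu_p(G)$ as a maximum, so the plan is a one-line unwinding of that definition with a small preliminary remark about well-definedness.

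First, I will note that $\mu_p(G)$ is well defined. Since $V(G)$ is finite and nonempty, the maximum in the definition is attained. For a single-vertex graph, $G-v$ is the empty graph, and I will adopt the convention $\chi_p(\emptyset)=0$, so the expression still makes sense.

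Next, fix an arbitrary vertex $v\in V(G)$. The quantity $\chi_p(G)-\chi_p(G-v)$ is one of the terms over which the maximum defining $\mu_p(G)$ is taken. Hence
\[
\chi_p(G)-\chi_p(G-v)\le \max_{u\in V(G)}\bigl(\chi_p(G)-\chi_p(G-u)\bigr)=\mu_p(G).
\]
Rearranging gives $\chi_p(G-v)\ge \chi_p(G)-\mu_p(G)$, which is exactly the claim of Lemma~\ref{lem:basic-gap}.

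There is no real obstacle here. The only point needing care is the degenerate case just mentioned, where $G-v$ may be empty or disconnected. The packing chromatic number is defined for any finite graph through the distance convention $d(u,v)=\infty$ between components, so the argument above is unaffected. I will also remark that the bound is sharp, with equality for any vertex attaining the maximum in the definition of $\mu_p(G)$.
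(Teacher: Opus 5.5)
Your proof is correct and matches the paper's, which likewise just observes that $\chi_p(G)-\chi_p(G-v)$ is one of the terms in the maximum defining $\mu_p(G)$ and then rearranges. Your side remarks are consistent with the paper: it also sets $\chi_p$ of the null graph to $0$ in its remark after Lemma~\ref{lem:gap-upper}, and allowing $G-v$ to be disconnected is what its later arguments assume.
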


\begin{proof}
The statement follows directly from the definition of $\mu_p(G)$.
\end{proof}

\begin{lemma}\label{lem:gap-upper}
For every graph $G$ with at least two vertices, we have
\[
\mu_p(G) \le \chi_p(G)-1.
\]
\end{lemma}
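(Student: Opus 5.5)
The inequality reduces to a lower bound on $\chi_p(G-v)$: for every $v\in V(G)$ we will show $\chi_p(G-v)\ge 1$. Then
\[
\chi_p(G)-\chi_p(G-v)\le \chi_p(G)-1 \quad\text{for every } v\in V(G),
\]
and maximizing the left-hand side over $v$ gives $\mu_p(G)\le \chi_p(G)-1$ directly from the definition of $\mu_p(G)$.

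The single step is the lower bound. Since $G$ has at least two vertices, $G-v$ has at least one vertex. Any packing coloring of a graph with a nonempty vertex set must assign some color from $\{1,\dots,k\}$ to that vertex, so $k\ge 1$. Hence $\chi_p(G-v)\ge 1$.

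There is no real obstacle; the only care needed is the role of the hypothesis $|V(G)|\ge 2$. If $G=K_1$, then $G-v$ is the empty graph with $\chi_p=0$. In that case $\mu_p(K_1)=1>0=\chi_p(K_1)-1$, so the bound genuinely fails without the hypothesis. Connectivity of $G-v$ is not needed, because $\chi_p$ is defined for any graph, and every vertex of a nonempty graph uses at least one color.
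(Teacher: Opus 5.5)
Your proof is correct and follows the paper's argument: $G-v$ is nonempty, so $\chi_p(G-v)\ge 1$, and maximizing $\chi_p(G)-\chi_p(G-v)\le \chi_p(G)-1$ over $v$ gives the bound. Your observation that the hypothesis fails for $K_1$ matches the paper's accompanying remark.
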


\begin{proof}
Let $v\in V(G)$. Since $G$ has at least two vertices, the graph $G-v$  has at least one vertex and thus
$\chi_p(G-v)\ge 1$. Hence,
\[
\chi_p(G)-\chi_p(G-v) \le \chi_p(G)-1.
\]
Taking the maximum over all vertices yields the result.
\begin{remark}
Note that the assumption that $G$ has at least two vertices is necessary:
if $G$ consists of a single vertex, then $\chi_p(G)=1$ and deleting 
the vertex yields the graph with no vertices, for which $\chi_p(G-v)=0$. In this 
case, the packing coloring gap is $\mu_p(G)=1$, which is not less 
than $\chi_p(G)-1=0$, so the lemma would not hold.
\end{remark}
\end{proof}

We will also use the following characterization of graphs
with packing chromatic number two, proved by Bre\v{s}ar et al.~\cite{Bresar2007}.

\begin{proposition} ~\cite{Bresar2007} \label{prop:chi2}
A connected graph $G$ satisfies $\chi_p(G)=2$
if and only if $G$ is a star.
\end{proposition}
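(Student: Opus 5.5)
The statement to prove is Proposition~\ref{prop:chi2}: a connected graph $G$ has $\chi_p(G)=2$ if and only if $G$ is a star. I read ``star'' as $K_{1,n}$ with $n\ge 1$. This excludes $K_1$, which has $\chi_p=1$. The plan is to handle the two directions separately, using only the definition of a packing coloring and the fact that color $2$ demands pairwise distance at least $3$.

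\emph{Stars have $\chi_p=2$.} For the easy direction, let $G=K_{1,n}$ with center $z$ and $n\ge1$. Give every leaf color $1$ and give $z$ color $2$. The leaves are pairwise at distance $2>1$, so they form an independent set. Color $2$ is used only once. Hence $\chi_p(G)\le 2$. Since $G$ has an edge, its two endpoints cannot both receive color $1$, so $\chi_p(G)\ge2$.

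\emph{$\chi_p=2$ forces a star.} For the converse, let $c$ be a packing $2$-coloring of a connected $G$. Then $G$ has at least two vertices, since $\chi_p(K_1)=1$. The set $X=c^{-1}(1)$ is independent and $Y=c^{-1}(2)$ is a $2$-packing, so vertices of $Y$ are pairwise at distance at least $3$. In particular $G$ is bipartite with parts $X$ and $Y$, and every edge joins $X$ to $Y$.

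\begin{itemize}
\item \emph{Each $x\in X$ has exactly one neighbor in $Y$.} It has at least one neighbor, because $G$ is connected with at least two vertices and $X$ is independent. It cannot have two, because two neighbors $y,y'\in Y$ of $x$ would satisfy $d(y,y')\le2$.
\item \emph{$|Y|=1$.} Suppose $|Y|\ge2$. Since $G$ is connected and bipartite with parts $X,Y$, a shortest path between two distinct vertices of $Y$ alternates between the parts. Its first interior vertex then lies in $X$ and is adjacent to two vertices of $Y$, contradicting the previous point.
\end{itemize}

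So $Y=\{z\}$, every vertex of $X$ is adjacent to $z$, and $X$ is independent. Therefore $G=K_{1,|X|}$ with $|X|\ge1$, which is a star.

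\emph{Main difficulty.} The only step needing care is the connectivity argument that rules out two vertices of color $2$, and this goes through cleanly via the ``unique $Y$-neighbor'' observation. The rest is bookkeeping, including stating the convention that $K_1$ is not counted as a star, or equivalently requiring $|V(G)|\ge2$.
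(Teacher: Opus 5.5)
The paper gives no proof of this proposition. It quotes the result from Bre\v{s}ar et al.\ and uses it as a black box, so there is no argument to match, and your self-contained proof is correct. In the converse direction, your two observations do force $|c^{-1}(2)|\le 1$: each color-$1$ vertex has exactly one color-$2$ neighbor, and the first interior vertex of a shortest path between two color-$2$ vertices would have two distinct color-$2$ neighbors. The only step you leave implicit is $c^{-1}(2)\neq\emptyset$. It holds because a connected graph on at least two vertices has an edge, so color $1$ alone cannot suffice.

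A shorter route to the same conclusion: fix a vertex $z$ of color $2$. Every vertex at distance $1$ or $2$ from $z$ must receive color $1$. A vertex at distance exactly $2$ would then be adjacent to its predecessor on a shortest path from $z$, giving two adjacent vertices of color $1$. So by connectivity $V(G)=N[z]$, the set $N(z)$ is independent, and $G=K_{1,\deg z}$.

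Your explicit convention ($K_1$ is not a star, $K_{1,1}$ is) is exactly what the statement needs. It is also the form the paper actually relies on later, when it applies the result to disconnected graphs $T-v$. There the correct reading is componentwise: $\chi_p=2$ if and only if every component is $K_1$ or a star and at least one is a star. This follows from your version because $\chi_p$ of a disjoint union is the maximum over its components.
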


\section{Graphs with Packing Coloring Gap At Most One}


In this section, we give examples of graphs $G$ satisfying $\mu_p(G)\leq 1$.
\subsection{Complete Graphs}
For the complete graph $K_n$ with $n \ge 2$, we have $\chi_p(K_n)=n$, since every pair of vertices is at distance~$1$.
Deleting any vertex yields $K_{n-1}$, whose packing chromatic number is $n-1$.
Therefore,
\[
\mu_p(K_n)=\chi_p(K_n)-\chi_p(K_n-v)=1.
\]
\subsection{Complete Bipartite Graphs}

Let $K_{m,n}$ be the complete bipartite graph with partition sizes $m$ and $n$, where $m \le n$. It is shown in \cite{UrendaEtAl2022} that
\[
\chi_p(K_{m,n}) = 1 + \min(m,n).
\] Deleting any vertex may leave the packing chromatic number unchanged or decrease it by at most one, so
\[
\mu_p(K_{m,n}) \le 1.
\]

Thus, complete bipartite graphs provide natural examples of graphs with packing coloring gap at most $1$.

\subsection{Paths and cycles}

The packing chromatic numbers of paths and cycles are well known (see ~\cite{Goddard2008}).
For every $n \ge 1$, we have

\[
\chi_p(P_n)=
\begin{cases}
1, & n=1,\\
2, & n=2,3,\\
3, & n \ge 4,
\end{cases}
\]

and

\[
\chi_p(C_n)=
\begin{cases}
3,  \text{if } n=3 \text{ or } n \equiv 0 \pmod{4},\\
4,  \text{otherwise}
\end{cases}
\]

These values follow from explicit packing colorings.
For instance, the infinite path admits a repeating $(1,2,1,3)$-pattern,
which shows that $\chi_p(P_n)\le 3$ for $n\ge 4$, and it is easy to
verify that two colors do not suffice. Similar periodic constructions
yield the values for cycles.

We now determine the packing coloring gap for these graphs.

\begin{proposition}
For every $n\ge 1$,
\[
\mu_p(P_n)\le 1
\quad\text{and}\quad
\mu_p(C_n)\le 1.
\]
\end{proposition}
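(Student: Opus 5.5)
We argue separately for paths and cycles, using only the values of $\chi_p(P_n)$ and $\chi_p(C_n)$ listed above and comparing $\chi_p(G)$ with $\chi_p(G-v)$ for each vertex $v$. One point needs care first. Although the paper assumes graphs are connected, $P_n-v$ may be disconnected. We will use the standard fact that the packing chromatic number of a disjoint union is the maximum over its components. This holds because vertices in different components are at infinite distance, so the packing condition only constrains vertices within a single component.

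\emph{Paths.} For $n=1$, deleting the unique vertex leaves the empty graph, so $\mu_p(P_1)=1-0=1$. For $n=2,3$ we have $\chi_p(P_n)=2$. Since $P_n-v$ has at least one vertex, $\chi_p(P_n-v)\ge 1$, and the gap is at most $1$; this is the argument of Lemma~\ref{lem:gap-upper}. For $n\ge 4$ we have $\chi_p(P_n)=3$, and $P_n-v$ is a disjoint union of at most two paths with $n-1\ge 3$ vertices in total. If every component had at most one vertex, there would be at most two vertices, which is impossible. Hence some component contains an edge, so $\chi_p(P_n-v)\ge 2$. The gap is therefore at most $3-2=1$.

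\emph{Cycles.} For every $v$, the graph $C_n-v$ is the path $P_{n-1}$, and we consider three cases.
\begin{itemize}
\item For $n=3$: $\chi_p(C_3)=3$ and $\chi_p(P_2)=2$.
\item For $n=4$: $\chi_p(C_4)=3$ and $\chi_p(P_3)=2$.
\item For $n\ge 5$: $\chi_p(C_n)\le 4$ and, since $n-1\ge 4$, $\chi_p(P_{n-1})=3$.
\end{itemize}
In every case $\chi_p(C_n)-\chi_p(C_n-v)\le 1$. Taking the maximum over $v$ gives $\mu_p(C_n)\le 1$.

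No step is a real obstacle here. The only subtle points are the disconnectedness of $P_n-v$, handled by the component-maximum observation above, and the degenerate case $n=1$, where the empty graph is assigned packing chromatic number $0$, as in the remark following Lemma~\ref{lem:gap-upper}.
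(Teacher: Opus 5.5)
Your proof is correct and takes the same approach as the paper: a case check against the known values of $\chi_p(P_n)$ and $\chi_p(C_n)$. You are more careful than the paper, which claims that deleting any vertex of $P_n$ yields $P_{n-1}$; your observation that deleting an internal vertex leaves two paths, at least one of which contains an edge when $n\ge 4$, is what makes the path case rigorous.
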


\begin{proof}
Deleting a vertex from $P_n$ produces $P_{n-1}$.
From the explicit formula above, the packing chromatic number
changes only in small exceptional cases
$n=1,2,3,4$, and in each case the decrease is at most one.
Hence $\mu_p(P_n)\le 1$.

Similarly, deleting a vertex from $C_n$ produces $P_{n-1}$.
Comparing the known values of $\chi_p(C_n)$ and $\chi_p(P_{n-1})$
shows that the difference is never greater than one.
Therefore $\mu_p(C_n)\le 1$.
\end{proof}

\subsection{Spiders}

A \emph{spider} is a tree consisting of a single central vertex, called the \emph{body}, 
from which several disjoint paths, called \emph{legs}, originate. Formally, a spider 
is a tree in which exactly one vertex has degree greater than 2 (the body), 
and all other vertices lie along paths starting from this central vertex.

We show in the following that the packing coloring gap is at most 1 for spiders.
\begin{proposition}
Let $S$ be a spider. Then $\mu(S)=1$ in the two cases described below, and $\mu(S)=0$ otherwise.
\end{proposition}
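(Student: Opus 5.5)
The plan is to first pin down $\chi_p(S)$ for every spider and then decide exactly when some $G-v$ can have smaller packing chromatic number. Since the paper's statement leaves the two cases implicit, I will derive them in the form: (i) $S$ is a star, or (ii) $S$ is not a star and $S-v$ is a disjoint union of stars (including $K_1$ and $K_2$) for some vertex $v$. I will then make (ii) explicit in terms of leg lengths. As in the statement, $\mu(S)$ means $\mu_p(S)$.

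\textbf{Step 1: computing $\chi_p(S)$.} If all legs have length $1$, then $S$ is a star and $\chi_p(S)=2$ by Proposition~\ref{prop:chi2}. Otherwise $S$ is connected and not a star, so $\chi_p(S)\ge 3$, again by Proposition~\ref{prop:chi2}. For the matching upper bound I give an explicit coloring. Color the body $2$. Along each leg, listed outward from the body, repeat the pattern $1,3,1,2,1,3,1,2,\dots$.

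I then check distances in two steps:
\begin{itemize}
\item Along a single leg, each color class repeats with period $2$ (color $1$) or $4$ (colors $2$ and $3$), as in the $(1,2,1,3)$ pattern for paths.
\item Between different legs, two vertices of color $3$ lie at distances $\equiv 2 \pmod 4$ from the body, so they are at distance at least $4>3$ from each other. Similarly, vertices of color $2$ lie at distances that are multiples of $4$ from the body (the body itself has distance $0$), so any two of them are at distance at least $4>2$.
\end{itemize}
Hence $\chi_p(S)=3$ for every non-star spider.

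\textbf{Step 2: upper bound on the gap.}
\begin{itemize}
\item For a star $S$ (with at least two vertices), Lemma~\ref{lem:gap-upper} gives $\mu_p(S)\le 1$. Equality holds because deleting the center leaves an edgeless graph, whose packing chromatic number is $1$.
\item For a non-star spider, a gap of $2$ would require $\chi_p(S-v)=1$, i.e.\ $S-v$ edgeless. This is impossible. Deleting the body leaves a leg of length at least $2$, which still contains an edge. Deleting any other vertex leaves the body together with at least two of its neighbours, which are still joined to it by edges.
\end{itemize}
So $\mu_p(S)\le 1$ always. Moreover, $\mu_p(S)=1$ exactly when some $S-v$ has all components of packing chromatic number at most $2$. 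By Proposition~\ref{prop:chi2}, this means every component of $S-v$ is a star, $K_2$ or $K_1$.

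\textbf{Step 3: classifying when some $S-v$ is a union of stars.} This case analysis is the main obstacle, and the resulting conditions are what the two cases of the statement should be.
\begin{itemize}
\item If $v$ is the body, the components of $S-v$ are paths of lengths equal to the legs. Each is a star exactly when it has at most $3$ vertices. So one sufficient condition is that every leg has length at most $3$.
\item If $v$ is not the body, then the component of $S-v$ containing the body has the body of degree at least $2$ in it. For that component to be a star, every other leg must have length $1$, and $v$ must be at distance $1$ or $2$ from the body on its leg. The part of that leg beyond $v$ must then be a path on at most $3$ vertices. This forces the remaining long leg to have length at most $5$.
\end{itemize}
I will verify these subcases carefully, including that $v$ at distance $1$ gives nothing beyond the first condition. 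The outcome is the following characterization: $\mu_p(S)=1$ if and only if either
\begin{enumerate}
\item every leg has length at most $3$ (this includes stars), or
\item all legs but one have length $1$ and the remaining leg has length at most $5$.
\end{enumerate}
In every other case $\mu_p(S)=0$.
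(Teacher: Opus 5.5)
Your proof is correct and takes essentially the same approach as the paper. Both give an explicit $3$-coloring of the spider (you color the body $2$ and the legs $1,3,1,2,\dots$; the paper colors the body $3$ and the legs $1,2,1,3,\dots$), analyze deleting the body versus deleting a leg vertex, and arrive at the paper's two cases: all legs of length at most $3$, or all legs but one of length $1$ with the remaining leg of length at most $5$. You are more explicit than the paper in ruling out a gap of $2$ via Proposition~\ref{prop:chi2}. One small slip: deleting the body's neighbour on the long leg gives a leg of length $4$, which is subsumed by your second condition rather than the first, but your final characterization is unaffected.
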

\begin{proof}
    Let $S$ be a spider with central vertex $c$. We have $\chi_p(S)\leq 3$. Indeed, one can color $c$ with color $3$ and color each leg by repeating the sequence $1,2,1,3$, which satisfies the packing constraints.\\
We now examine the effect of deleting a vertex from $S$. The value $\mu(S)=1$ occurs in the following two cases.
\begin{itemize}
    \item Case 1: If all legs have order at most $4$. In this case, removing the central vertex produces a disjoint union of paths, each of order at most $3$. Consequently, the packing chromatic number decreases from $3$ to $2$, or from $2$ to $1$.
    
    \item Case 2: If $S$ is a star  with exactly one leg whose order is between $3$ and $6$. In this case, removing the third vertex of this leg yields a disjoint union of a star and possibly a path of order at most $3$. Hence, the packing chromatic number decreases from $3$ to $2$.
\end{itemize}
Conversely, if neither of the above conditions holds, then removing any vertex still yields a graph that contains a path of order at least $4$, which has packing chromatic number $3$. Therefore, $\mu(S)=0$.
\end{proof}

\section{Graphs with Bounded Packing Coloring Gap}

In this section, we study graphs whose packing coloring gap is bounded by a fixed constant. The parameter \(\mu_p(G)\) measures the maximum decrease of the packing chromatic number under deletion of a single vertex. While in general graphs this parameter can be arbitrarily large (see section \ref{unboundedSection}), it is natural to investigate how \(\mu_p(G)\) behaves for specific graph classes and how structural properties of a graph restrict its magnitude.

We begin our study with trees due to their simple structure and well-understood distance relationships, making them fundamental building blocks for analyzing packing coloring behavior. Moreover, our approach is motivated by the fact that determining the packing chromatic
number is NP-complete even for trees \cite{{Argiroffo2014}}. Thus, although trees have a simple
structure, the problem remains computationally difficult. This makes it
natural to study related parameters, such as the packing coloring gap,
within this class of graphs.  
We focus first on \emph{caterpillars} which are trees in
which all vertices of degree at least two lie on a central path. The
structure of caterpillars allows for an explicit analysis of the packing
coloring gap.

After establishing results for caterpillars, we extend our analysis to more complex tree structures, including generalized caterpillars and other constructions obtained through standard graph operations. A notable example is the \emph{corona operation} with \(K_1\), which attaches a pendant vertex to every vertex of a graph. These constructions allow us to explore how \(\mu_p(G)\) behaves under controlled graph extensions and provide insight into more general classes of graphs with bounded packing coloring gaps.

\subsection{Packing Coloring Gap of Caterpillars}

Let $T$ be a caterpillar with a central path $P$. We denote the vertices
of $P$ by $v_1, v_2, \dots, v_\ell$ in order. The leaves attached to
each $v_i$ are denoted by $L(v_i)$ for $i \in [l]$. The structure of $T$ allows us to
analyze packing colorings iteratively along the path, taking into account
the contribution of leaves to the distance constraints.

Sloper \cite{Sloper2004} proved that the packing chromatic number of any caterpillar is at
most $7$.
Hence, for any caterpillar $T$, we have $\chi_p(T)\in\{1,2,\dots,7\}$.
We prove Theorem~\ref{thm:caterpillar-gap} by distinguishing cases
according to the value of $\chi_p(T)$.\\ Moreover, we will show that the bound of the packing coloring gap is sharp.
\begin{theorem}\label{thm:caterpillar-gap}
Let $T$ be a caterpillar. Then
\[
\mu_p(T) \le 2.
\]
\end{theorem}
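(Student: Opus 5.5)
We will prove the equivalent statement that $\chi_p(T-v)\ge \chi_p(T)-2$ for every vertex $v$ of $T$, splitting into cases according to $\chi_p(T)$ and using Sloper's bound $\chi_p(T)\le 7$ \cite{Sloper2004}.

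\emph{Small values.} If $\chi_p(T)\le 3$, Lemma~\ref{lem:gap-upper} gives $\mu_p(T)\le 2$ directly; if $T=K_1$ the gap is $1$. Now let $\chi_p(T)=4$. To get $\mu_p(T)=3$ we would need $\chi_p(T-v)=1$ for some $v$, i.e.\ $T-v$ edgeless. Then $T$ is a star, so $\chi_p(T)=2$ by Proposition~\ref{prop:chi2}, a contradiction. Hence $\mu_p(T)\le 2$ whenever $\chi_p(T)\le 4$.

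\emph{Reduction for the large values.} For $\chi_p(T)\in\{5,6,7\}$ we argue constructively: from an optimal packing coloring $c$ of $T-v$ with $k$ colors we build a packing coloring of $T$ with at most $k+2$ colors.

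If $v$ is a leaf, distances in $T-v$ agree with those in $T$. Giving $v$ the new color $k+1$ then yields $\chi_p(T)\le k+1$.

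So assume $v=v_i$ lies on the spine. Then $T-v$ splits into the isolated leaves $L(v_i)$ and two caterpillars $T_1\ni v_{i-1}$ and $T_2\ni v_{i+1}$. We may assume the leaves of $v_i$ carry color $1$. We give $v_i$ color $k+1$. The only violated pairs are then $x\in T_1$ and $y\in T_2\cup L(v_i)$ with $c(x)=c(y)=j\ge 2$ and
\[
d(x,v_i)+d(v_i,y)\le j .
\]
Color $1$ causes no conflict, since two such vertices are never adjacent.

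\emph{Repairing the conflicts (main obstacle).} The key step, and the one I expect to be hardest, is to remove all these conflicts using a single further color $k+2$. I would first exploit the freedom in choosing $c$ on $T_2$. Permuting colors is not allowed, because the constraints depend on the color values. Instead we may replace $c|_{T_2}$ by any other optimal coloring of $T_2$, for instance a shifted or reflected version of the periodic spine pattern.

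The aim is a \emph{normal form} near $v_i$. All vertices of $T_2$ within distance $k-1\le 6$ of $v_i$ should use only color $1$ together with a few larger colors. Those larger colors should be absent from the corresponding ball in $T_1$, except for at most one conflicting class, which is then recolored $k+2$. One must check that these vertices are pairwise far apart, i.e.\ at distance greater than $k+2$; on a caterpillar this reduces to a bounded check along the spine near $v_i$.

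If no normal form exists in general, the fallback is a finite case analysis for $k\in\{3,4,5\}$, the only values needed, since $k\ge 5$ already gives the bound via $\chi_p(T)\le 7$. This analysis would use the local structure forced on $T-v$ by $\chi_p(T)\ge 5$: long spine segments with many leaves.

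Finally, for sharpness we would exhibit a caterpillar where deleting one spine vertex lowers $\chi_p$ by exactly $2$, to be checked by explicit colorings.
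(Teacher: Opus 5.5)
Your handling of $\chi_p(T)\le 4$ and of leaf deletion is correct and matches the paper's Cases 1--2. For the remaining cases you use the paper's own strategy from Cases 4--5: reinsert $v$ into an optimal coloring of $T-v$ with a new color, then repair conflicts with one more color. But there is a genuine gap. For $\chi_p(T)\in\{5,6,7\}$, the claim $\chi_p(T)\le k+2$ with $k=\chi_p(T-v)$ is just a restatement of the theorem, and the repair step that would prove it is never carried out. The ``normal form'' is only stated as an aim, and the fallback case analysis is not done. As written, nothing beyond $\chi_p(T)\le 4$ is proved. Also, the values that matter are $k\in\{2,3,4\}$, not $\{3,4,5\}$: $k=5$ is already covered by Sloper's bound, and $k=2$ is needed when $\chi_p(T)=5$.

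For $k\le 3$ your scheme does close quickly, and you should say why:
\begin{itemize}
\item a color-$2$ conflict forces $c(v_{i-1})=c(v_{i+1})=2$;
\item a color-$3$ conflict forces $c(v_{i-1})=3$ or $c(v_{i+1})=3$;
\item each color class $j\ge 2$ meets $N_{T_1}[v_{i-1}]$ and $N_{T_2}[v_{i+1}]$ in at most one vertex.
\end{itemize}
So there is at most one conflicting pair, and recoloring one endpoint with $k+2$ suffices.

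The real obstacle is $k=4$, $\chi_p(T)=7$. There, conflicts in two colors can coexist, for example:
\begin{itemize}
\item spine colors $4,2,v,2,4$ or $3,4,v,3,4$ around $v$;
\item $c(v_{i\pm1})=2$ together with a color-$4$ leaf at both $v_{i-1}$ and $v_{i+1}$.
\end{itemize}
The two conflicting pairs are disjoint. The endpoints of the color-$2$ or color-$3$ pair lie within distance $2$ of $v$, and those of the color-$4$ pair within distance $3$. So any two recolored endpoints are at distance at most $5$ and cannot share color $5$ or color $6$. Hence your ``one further color'' repair, applied to conflicting vertices, provably fails in these configurations.

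What is needed is either a genuine local recoloring of a window of the spine that also reshuffles the old colors, or a structural argument that $T$ near $v$ is simple enough to be $6$-colored directly. The paper attempts the former with explicit sequences such as $5\,3\,2\,4\,6\,3\,5$ on $v_{i-3},\dots,v_{i+3}$. Your normal form would have to rule out exactly these two-color conflicts (by re-choosing the coloring of $T_2$), but you assert it rather than show it exists, so the case $\chi_p(T)=7$ remains unproved.
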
 
\begin{proof}
First, we distinguish between the deletion of a leaf and the deletion of a vertex
on the central path of the caterpillar. 

If $v$ is a leaf, then $T-v$ is simply a caterpillar obtained by removing
a pendant vertex, and the underlying central path remains unchanged. 

In contrast, let $v'$ be the neighbor of $v$ on the central path, then removing $v'$
breaks the spine of the caterpillar and results in two disjoint caterpillars,
together with an isolated vertex, $v$. In particular, these resulting
caterpillars are subgraphs of $T-v$.
Consequently,
\[
\chi_p(T-v) \ge \chi_p(T-v'),
\]
Hence, the deletion of a vertex on the central path may produce
a larger decrease in the packing chromatic number.
For this reason, in the analysis of the packing coloring gap, it suffices
to consider vertices lying on the central path.

\medskip
\noindent\textbf{Case 1: $\chi_p(T)\in\{1,2,3\}$.}\\
In both cases where $\chi_p(T)=1$ and $2$, it is trivial that the gap is at most 1.\\
If $\chi_p(T)=3$, then $T-v$ is nonempty for every $v\in V(T)$, and hence
$\chi_p(T-v)\ge 2$. Therefore,
$\chi_p(T)-\chi_p(T-v)\le 1$, and so $\mu_p(T)\le 1$.

\medskip
\noindent\textbf{Case 2: $\chi_p(T)=4$.}
We prove that $\mu_p(T)=1$, except in a specific case where
$\mu_p(T)=2$. Let $v\in V(T)$. Clearly, $\chi_p(T-v)\neq 1$, because otherwise $T$ is a star and has $\chi_p(T)=2$ by Proposition~\ref{prop:chi2}. Likewise, we have $\chi_p(T-v)=2$ if and only if $T-v$ is a star.
Hence, the removal of the vertex $v$ results in a disjoint union of stars. Therefore, the gap equals $2$ exactly when $\chi_p(T-v)=2$, i.e., when $T-v$ is a disjoint union of stars. This occurs only when the central path has order $3$.

Otherwise, the graph configuration shown in Figure~\ref{fig:case2} appears as a subgraph of the graph obtained after deleting any vertex, and thus deleting a vertex does not yield a disjoint union of stars. Hence, for all other caterpillars with \(\chi_p(T)=4\), we have
$\mu_p(T)=1$.

\begin{figure}[h]
\centering
\begin{tikzpicture}[scale=1,
  spine/.style={circle, draw, fill=black, inner sep=2pt},
  removed/.style={circle, draw, thick, inner sep=6pt},
  leaf/.style={circle, draw, fill=black, inner sep=1.5pt}
]

\node[spine] (v1) at (0,0) {};
\node[spine] (v2) at (2,0) {};

\draw (v1)--(v2);

\node[leaf] at (-0.4,1.1) {};
\node at (0,1.1) {$\cdots$};
\node[leaf] at (0.4,1.1) {};
\draw (v1)--(-0.4,1.1);
\draw (v1)--(0.4,1.1);

\node[leaf] at (1.6,1.1) {};
\node at (2,1.1) {$\cdots$};
\node[leaf] at (2.4,1.1) {};
\draw (v2)--(1.6,1.1);
\draw (v2)--(2.4,1.1);

\end{tikzpicture}
\caption{A configuration that appears as a subgraph of $T-v$ for any vertex $v$. Each vertex shown is incident with at least one leaf, possibly together with additional leaves.}
\label{fig:case2}
\end{figure}

\noindent\textbf{Case 3: $\chi_p(T)=5$.}  
We show that, in this case, the packing coloring gap never exceeds $2$; that is, for every vertex $v \in V(T)$, 
\[
\chi_p(T-v) \ge \chi_p(T) - 2.
\]

Suppose for contradiction that there exists a vertex $v$ such that
\[
\chi_p(T)-\chi_p(T-v)=3.
\]
Then $\chi_p(T-v) = 2$, which implies that $T-v$ is a disjoint union of stars. Re-adding the vertex $v$ back to this disjoint union of stars produces the tree $T$. However, such a graph, obtained by linking two stars through a single vertex(possibly with its own leaves), has a packing chromatic number at most $4$, which contradicts our assumption that $\chi_p(T)=5$.
Therefore, in this case, the packing coloring gap satisfies 
\[
\mu_p(T) \leq 2.
\]

\medskip
\noindent\textbf{Case 4: $\chi_p(T)=6$.}
Suppose, for contradiction, that $\mu_p(T)=3$.
Then there exists a vertex $v\in V(T)$ such that $\chi_p(T-v)=3$.\\
Hence, the graph $T-v$ is a disjoint union of subgraphs
$ T_1$ and $ T_2$,
each of which admits a packing coloring using the color set
$\{1,2,3\}$. We now consider feasible packing colorings of $ T_1$ and $ T_2$, with at most 3 colors, reinsert the vertex $v$ and analyze the possible conflicts
that may arise. Observe that the number of vertices involved in conflicts is even. Indeed, any conflict must occur between vertices belonging to $T_1$ and $T_2$, since each of $T_1$ and $T_2$ is already packing colored. In particular, if three vertices were involved in a conflict, then two of them would already be in conflict within $T_1$ or $T_2$, which is impossible.
Now let $v_{i-1}$ and $v_{i+1}$ denote the vertices immediately preceding
and succeeding $v$ on the central path of the caterpillar $T$.

\medskip
\noindent\emph{First conflict.}
Assume that both $v_{i-1}$ and $v_{i+1}$ receive color~$2$.
In this case, recolor $v_{i-1}$ with color~$4$ and assign color~$5$
to the vertex $v$.
No further conflicts arise, and thus at most two new colors are needed.

\medskip
\noindent\emph{Second conflict.}
Assume that $v_{i-1}$ and $v_{i+1}$ both receive color~$3$.
Again, recolor $v_{i-1}$ with color~$4$ and assign color~$5$
to the vertex $v$.
This resolves all conflicts, using at most two additional colors.

\medskip
\noindent\emph{Third conflict.}
Assume that $v_{i-1}$ and $v_{i+2}$ both receive color~$3$.
Again, recolor $v_{i-1}$ with color~$4$ and assign color~$5$
to the vertex $v$.\\ Similarly, if $v_{i-2}$ and $v_{i+1}$ both receive color~$3$, we proceed in the same way by recoloring $v_{i-2}$ with color~$4$ and assigning color~$5$ to the vertex $v$.\\
This resolves all conflicts, using at most two additional colors

\medskip
Observe that non of these two conflicts can occur simultaneously. Therefore, in all cases, the reinsertion of $v$ requires at most two
additional colors.
Consequently, we obtain
\[
\chi_p(T) \le 5,
\]
which contradicts the assumption that $\chi_p(T)=6$.

Hence, the packing coloring gap cannot be equal to $3$ in this case,
and we conclude that
\[
\mu_p(T) \le 2.
\]
\medskip
\noindent\textbf{Case 5: $\chi_p(T)=7$.}
Suppose, for contradiction, that $\mu_p(T)=3$.
Then there exists a vertex $v\in V(T)$ such that $\chi_p(T-v)=4.$ Hence, the graph $T-v$ can be expressed as a disjoint union
\[
T-v = T_1 \cup T_2,
\]
where each component admits a packing coloring using the color set
$\{1,2,3,4\}$.

We now reinsert the vertex $v$ and analyze the possible conflicts that
may arise with respect to the packing coloring of $T-v$.
Let $v_{i-1}$ and $v_{i+1}$ denote the neighbors of $v$ on the central
path of the caterpillar $T$.

\medskip

If exactly two vertices are involved in a conflict by the reinsertion of $v$, then the conflict 
can be resolved by recoloring one of the conflicting vertices with a new
color, say 5, and assigning a new color to $v$, say 6. Thus, at most two additional colors are required.

Consequently, we obtain
\[
\chi_p(T) \le 6,
\]
which contradicts the assumption that $\chi_p(T)=7$.

Assume now that four vertices are involved in conflicts that
occur in pairs.
Consider the vertices on the central path in order
\[
v_{i-2},\, v_{i-1},\, v,\, v_{i+1},\, v_{i+2}.
\]

\medskip
\noindent\emph{Conflict 1.}  
Suppose that the colors of the vertices
$v_{i-2},\, v_{i-1},\, v,\, v_{i+1},\, v_{i+2}$
form the sequence $4,\,2,\,v,\,2,\,4.$

\medskip
\noindent\emph{Subcase 1.}  
Assume that both $v_{i-3}$ and $v_{i+3}$ receive color~$3$.  
We recolor the vertices from $v_{i-3}$ to $v_{i+3}$ according to the sequence 
$5\,\,3\,\,2\,\,4\,\,6\,\,3\,\,5$. This recoloring is valid since $v_{i-3}$ and $v_{i+3}$ are at distance~$6$ and can both receive color~$5$.  
The vertices $v_{i-2}$ and $v_{i+2}$ can be assigned color~$3$,  
$v_{i+1}$ is recolored with color~$6$, and $v$ is assigned color~$4$.

\medskip
\noindent\emph{Subcase 2.}  
Assume that both $v_{i-3}$ and $v_{i+3}$ receive color~$1$. We recolor the vertices from $v_{i-3}$ to $v_{i+3}$ according to the sequence
$1\,\,4\,\,2\,\,3\,\,5\,\,6\,\,1.$ This recoloring is valid because $v$ is colored~$3$, which is sufficiently far from any other vertex colored~$3$, and $v_{i+1}$ and $v_{i+2}$ receive new colors to resolve their conflicts.

\medskip
\noindent\emph{Subcase 3.}  
Assume $v_{i-3}$ has color~$3$ and $v_{i+3}$ has color~$1$.  
We recolor the vertices from $v$ to $v_{i+4}$ according to the sequence $6\,\,3\,\,2\,\,1\,\,5.$ This recoloring is valid since the new color of $v_{i+4}$ allows $v_{i+2}$
to take color~$2$ and $v_{i+1}$ to take color~$3$, ensuring that there are no conflicts with other vertices on the path.

\medskip
\noindent\emph{Remaining conflicts.}  
The remaining conflict patterns are as follows:
\[
4\,3\,\,v\,\,3\,4 \,,\quad 3\,4\,\,v\,\,3\,4 \,,\quad 4\,3\,\,v\,\,4\,3
\]

All of these cases are treated the same way; that is, recoloring the vertices $v_{i+1}$ and $v_{i+2}$ with colors~$5$ and~$6$,  
respectively, and assigning color~$2$ to the vertex $v$.  

This recoloring resolves all conflicts, and the packing coloring remains valid.\\ \\
Combining all cases $\chi_p(T) = 1, \dots, 7$ and analyzing the possible
conflicts that arise when a vertex $v$ is reinserted, we see that in neither of these cases the packing coloring gap can reach $3$.\\
In each case, any potential conflict can be resolved by introducing at most two additional colors, and the resulting coloring remains valid.  

Hence, we conclude that for any caterpillar $T$, $\mu_p(T) \le 2.$\\
This completes the proof of the theorem.
\end{proof}
As stated in the introduction, the authors in \cite{KlavzarRall2019} investigated the class of caterpillars and established that for every $k \in \{1,\ldots,7\}$ there exists a $\chi_p$-critical caterpillar whose packing chromatic number is $k$. Following this line of research, we prove the following result, as shown in Theorem~\ref{gap1caterpillar} and the accompanying figures.
\begin{theorem} \label{gap1caterpillar}
For every $k \in \{1, \dots , 7\}$, there exists a caterpillar $T$ such that 
its packing chromatic number $\chi_p(T) = k$ and its packing coloring gap $\mu_p(T) = 1$.
\end{theorem}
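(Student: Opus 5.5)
We will construct, for each $k$, an explicit caterpillar $T_k$ with $\chi_p(T_k)=k$. We will then check two things: some vertex deletion lowers $\chi_p$ by exactly one, so $\mu_p(T_k)\ge 1$; and no deletion lowers it by two, so $\mu_p(T_k)\le 1$. By Theorem~\ref{thm:caterpillar-gap} we already know $\mu_p(T_k)\le 2$, so only the value $2$ has to be excluded.

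\textbf{Small values.} For $k=1$ take $K_1$; by the remark after Lemma~\ref{lem:gap-upper}, $\mu_p(K_1)=1$. For $k=2$ take $K_2$ (or any star with at least two vertices). By Lemma~\ref{lem:gap-upper}, $\mu_p\le 1$, and deleting a leaf of $K_2$ gives $K_1$, so the gap is exactly $1$. For $k=3$ take $P_4$. It has $\chi_p(P_4)=3$, and deleting an end vertex gives $P_3$ with $\chi_p=2$, so $\mu_p\ge1$. Case~1 of the proof of Theorem~\ref{thm:caterpillar-gap} gives $\mu_p\le 1$.

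\textbf{Values $4\le k\le 7$.} Start from the $\chi_p$-critical caterpillars $C_k$ of Klav\v{z}ar and Rall~\cite{KlavzarRall2019} with $\chi_p(C_k)=k$. For these, every deletion lowers $\chi_p$, so $\mu_p(C_k)\ge 1$. If one can check directly that $\chi_p(C_k-v)\ge k-1$ for all $v$, we are done. Since these are small explicit graphs, we would first try exactly this check, displaying the $C_k$ in figures with explicit packing colorings.

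If some $C_k$ has gap $2$, we use a robust fallback. Let $D$ be a caterpillar with $\chi_p(D)=k-1$; for instance, take the component of $C_k-v$ that realizes $\chi_p(C_k-v)=k-1$. Form $T_k$ by joining an end of the spine of $C_k$ to an end of the spine of $D$ through a long path $Q$ of length at least $8$. The resulting tree is a caterpillar, and it satisfies the following.
\begin{itemize}
\item $\chi_p(T_k)\ge k$, since $T_k$ contains $C_k$.
\item $\chi_p(T_k-v)\ge k-1$ for every $v$, because every $T_k-v$ contains either $C_k-v$ (when $v\in C_k$), which has $\chi_p\ge k-1$ by criticality and monotonicity, or an intact copy of $D$.
\item Deleting the critical vertex $v$ of $C_k$ should give $\chi_p(T_k-v)=k-1$.
\end{itemize}

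\textbf{Main obstacle: the gluing step.} The step I expect to be hardest is showing that attaching the long path does not increase the packing chromatic number. The required gluing statement is: if caterpillars $A$ and $B$ have packing colorings with at most $m\ge 3$ colors, then joining them by a sufficiently long spine path gives a caterpillar with packing chromatic number at most $m$.

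The plan for this statement is as follows. All colors $\ge 4$ used in $A$ and $B$ are at distance more than $7$ apart once $Q$ has length at least $8$, so they never conflict. Hence only colors $1,2,3$ near the two junctions matter. Color $Q$ with the periodic pattern $1,2,1,3$, choosing the phase at each end to match the colors of the spine endpoints of $A$ and $B$. Adjust the length of $Q$ modulo $4$ so that both ends can be matched simultaneously, and, if necessary, recolor one junction vertex of $Q$ with a color $\ge 4$, which is available since $m\ge 4$.

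Verifying that a suitable phase always exists for each endpoint color pattern is a finite case check. Carrying out that check, together with the explicit colorings certifying $\chi_p(T_k-v)=k-1$, would complete the proof.
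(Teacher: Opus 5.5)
The proposal does not prove the theorem for any $k\ge 4$: both of its routes for $4\le k\le 7$ rest on something unverified or false. Your cases $k\le 3$ are fine ($K_1$, $K_2$, $P_4$; for $P_4$ it is simplest to note that every $P_4-v$ contains an edge). Your first route for $4\le k\le 7$ is only a plan: the check $\chi_p(C_k-v)\ge k-1$ is never carried out. So everything rests on the fallback, and the fallback rests on your gluing statement, which is false as stated. Your sketch keeps the colorings of $A$ and $B$ fixed, colors $Q$ by $1,2,1,3$ with chosen phases, and allows one junction color $\ge 4$. That sketch cannot be finished by a finite check, because some ends admit no extension at all.

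Here is a counterexample with $m=4$. Let $A$ have spine $a\,b\,c\,d\,e$, where each of $a,b,c,d$ carries at least three leaves and $e$ carries exactly one leaf $f$. Consider any packing coloring with colors $\{1,2,3,4\}$ of a tree containing $A$. A vertex colored $1$ has at most three neighbors, since they need distinct colors from $\{2,3,4\}$. Hence $a,b,c,d$ cannot receive $1$. Being pairwise within distance $3$, they use each of $3,4$ at most once, which forces $a=d=2$ and $\{b,c\}=\{3,4\}$. Then $e=1$, and $f$ can only receive $3$, which forces $b=3$ and $c=4$. Coloring all other leaves $1$ is valid, so $\chi_p(A)\le 4$.

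Now attach a path $p_1p_2\cdots$ at $a$. The vertex $p_1$ must be $1$, since $b$ and $c$ block $3$ and $4$. Then $p_2$ sees colors $1,2,3,4$ at distances $1,2,3,4$ (namely $p_1,a,b,c$), so no color remains. Attaching at a leaf of $a$ fails the same way, because that leaf is forced to be $1$. So joining $A$ at this end through a path forces a fifth color.

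You need the gluing lemma twice. With $m=k$ it gives $\chi_p(T_k)\le k$, where you glue $C_k$, whose ends you do not control. With $m=k-1$ it gives $\chi_p(T_k-v)\le k-1$, where you glue an uncontrolled remnant of $C_k-v$ to $D$. For $k=4$ the second use has $m=3$, where your sketch has no color $\ge 4$ available, and the statement fails there too. For example, take the spine $a\,b\,c$ with one leaf at $a$ and two leaves at each of $b,c$; a new neighbor of $a$ cannot be colored from $\{1,2,3\}$.

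There is also a reversed inequality in your lower bound. Criticality gives $\chi_p(C_k-v)\le k-1$, not $\ge k-1$. The correct reason $\chi_p(T_k-v)\ge k-1$ holds is that $D$ survives intact when $v\in V(C_k)$, and $C_k$ survives intact otherwise.

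The paper avoids gluing altogether. It exhibits explicit caterpillars with packing colorings for $k=3,\dots,6$. For $k=7$ it takes $P_{35}$ with six leaves at every spine vertex. Deleting a spine vertex then leaves a uniform component with spine order between $17$ and $34$, which it takes to have packing chromatic number $6$. To repair your argument, you need either explicit examples of this kind or a gluing statement restricted to ends whose local structure you actually verify.
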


\begin{proof}
For each value of $k$, we provide an explicit example of a caterpillar $T$ 
having a gap $1$, illustrated in a figure. 
In each case, the removal of any red-colored vertex will decrease the packing chromatic number by $1$.
We omit the cases $k = 1$ and $k = 2$, and start with $k = 3$.

\usetikzlibrary{positioning}

\begin{figure}[h!]
\centering

\tikzset{
    leaf/.style={circle, fill=black, inner sep=1.5pt}
}

\begin{tikzpicture}[scale=1]
\node[draw, circle, minimum size=7mm, fill=red!30] (v1) at (0,0) {2}; 
\node[draw, circle, minimum size=7mm, fill=red!30] (v2) at (2,0) {3}; 

\node[leaf] at (-0.4,1.1) {};
\node at (0,1.1) {$\cdots$};
\node[leaf] at (0.4,1.1) {};
\draw (v1.north) -- (-0.4,1.1);
\draw (v1.north) -- (0.4,1.1);

\node[leaf] at (1.6,1.1) {};
\node at (2,1.1) {$\cdots$};
\node[leaf] at (2.4,1.1) {};
\draw (v2.north) -- (1.6,1.1);
\draw (v2.north) -- (2.4,1.1);

\draw (v1) -- (v2);

\node[below] at (1,-1) {$k=3$};
\end{tikzpicture}

\vspace{0.5cm}

\begin{tikzpicture}[scale=1]
\node[draw, circle, minimum size=7mm, fill=red!30] (v1) at (0,0) {2};
\node[draw, circle, minimum size=7mm, fill=red!30] (v2) at (1.5,0) {1};
\node[draw, circle, minimum size=7mm, fill=red!30] (v3) at (3,0) {3};
\node[draw, circle, minimum size=7mm, fill=red!30] (v4) at (4.5,0) {2};
\node[draw, circle, minimum size=7mm, fill=red!30] (v5) at (6,0) {4};

\node[leaf] at (-0.4,1.1) {};
\node at (0,1.1) {$\cdots$};
\node[leaf] at (0.4,1.1) {};
\draw (v1.north) -- (-0.4,1.1);
\draw (v1.north) -- (0.4,1.1);

\node[leaf] at (4.1,1.1) {};
\node at (4.5,1.1) {$\cdots$};
\node[leaf] at (4.9,1.1) {};
\draw (v4.north) -- (4.1,1.1);
\draw (v4.north) -- (4.9,1.1);

\node[leaf] at (5.6,1.1) {};
\node at (6,1.1) {$\cdots$};
\node[leaf] at (6.4,1.1) {};
\draw (v5.north) -- (5.6,1.1);
\draw (v5.north) -- (6.4,1.1);

\draw (v1) -- (v2) -- (v3) -- (v4) -- (v5);

\node[below] at (3,-1) {$k=4$};
\end{tikzpicture}

\vspace{0.5cm}

\begin{tikzpicture}[scale=1]
\node[draw, circle, minimum size=7mm] (v1) at (0,0) {2};
\node[draw, circle, minimum size=7mm, fill=red!30] (v2) at (1.5,0) {3};
\node[draw, circle, minimum size=7mm, fill=red!30] (v3) at (3,0) {4}; 
\node[draw, circle, minimum size=7mm, fill=red!30] (v4) at (4.5,0) {2}; 
\node[draw, circle, minimum size=7mm, fill=red!30] (v5) at (6,0) {5};
\node[draw, circle, minimum size=7mm] (v6) at (7.5,0) {3};

\foreach \i/\x in {1/0,2/1.5,3/3,4/4.5,5/6,6/7.5} {
    \node[leaf] at (\x-0.4,1.1) {};
    \node at (\x,1.1) {$\cdots$};
    \node[leaf] at (\x+0.4,1.1) {};
    \draw (v\i.north) -- (\x-0.4,1.1);
    \draw (v\i.north) -- (\x+0.4,1.1);
}

\foreach \i/\j in {1/2,2/3,3/4,4/5,5/6} {
    \draw (v\i) -- (v\j);
}

\node[below] at (3.75,-1) {$k=5$};
\end{tikzpicture}

\vspace{0.5cm}

\begin{tikzpicture}[scale=0.8]
\node[draw, circle, minimum size=7mm] (v1) at (0,0) {2};
\node[draw, circle, minimum size=7mm] (v2) at (1.5,0) {3};
\node[draw, circle, minimum size=7mm] (v3) at (3,0) {4};
\node[draw, circle, minimum size=7mm, fill=red!30] (v4) at (4.5,0) {2}; 
\node[draw, circle, minimum size=7mm] (v5) at (6,0) {5};
\node[draw, circle, minimum size=7mm, fill=red!30] (v6) at (7.5,0) {3}; 
\node[draw, circle, minimum size=7mm] (v7) at (9,0) {2};
\node[draw, circle, minimum size=7mm] (v8) at (10.5,0) {4};
\node[draw, circle, minimum size=7mm] (v9) at (12,0) {6};
\node[draw, circle, minimum size=7mm] (v10) at (13.5,0) {2};

\foreach \i/\x in {1/0,2/1.5,3/3,4/4.5,5/6,6/7.5,7/9,8/10.5,9/12,10/13.5} {
    \node[leaf] at (\x-0.4,1.1) {};
    \node at (\x,1.1) {$\cdots$};
    \node[leaf] at (\x+0.4,1.1) {};
    \draw (v\i.north) -- (\x-0.4,1.1);
    \draw (v\i.north) -- (\x+0.4,1.1);
}

\foreach \i/\j in {1/2,2/3,3/4,4/5,5/6,6/7,7/8,8/9,9/10} {
    \draw (v\i) -- (v\j);
}

\node[below] at (7,-1) {$k=6$};
\end{tikzpicture}

\caption{Caterpillars $T$ with $\chi_p(T) = k$ and $\mu_p(T) = 1$ for $k=3,4,5,6$. 
Numbers in spine vertices shows a packing coloring for $T$.}
\label{fig:caterpillars}
\end{figure}
\paragraph{}
For $k=7$, we explain the example as follows. Klavžar et al. showed in ~\cite{KlavzarRall2019} that if $n \ge 35$, then attaching to every vertex of $P_n$ precisely (or more) $6$ leaves yields a caterpillar $T$ with $\chi_p(T) = 7$. 

To construct our example, let $T$ be the caterpillar with $n = 35$ and attach to each vertex of $P_n$ exactly $6$ leaves. Then $\chi_p(T) = 7$. Moreover, removing any vertex on the central path results in a subgraph having one of its components being a caterpillar $T'$ whose central path has order $n'$ with $17 \le n' \le 34$, and $6$ leaves attached to each vertex. Therefore,  $\chi_\rho(T') = 6$. Hence, the packing chromatic gap in this example is $1$.

\end{proof}

\section{Generalized Caterpillars and Corona Products}

Motivated by the results obtained for caterpillars, we extend our study to a natural and structured
generalization obtained by the corona operation.
Recall that for a graph $G$, the corona product $G \odot K_1$ is the graph obtained by attaching a single
pendant vertex to each vertex of $G$.

In this section, we focus on graphs of the form $T \odot K_1$, where $T$ is a caterpillar.
These graphs can be viewed as caterpillars in which every vertex receives an additional pendant neighbor. Our goal is to investigate the impact of the corona operation on the packing coloring gap.
In particular, we prove that for caterpillars, the corona operation preserves the same
upper bound on the packing coloring gap. More precisely, we show that for any caterpillar $T$, we have $\mu_p(T \odot K_1) \le 2$

As in the previous section, and as explained in the proof of Theorem~\ref{thm:caterpillar-gap}, 
we restrict our attention to vertices on the central path, since their deletion 
has the most significant impact on the packing chromatic number, and hence on the gap.

The proof proceeds by analyzing the possible values of the packing chromatic number of
$T \odot K_1$ and showing that, in all cases, the deletion of a vertex cannot decrease the packing
chromatic number by more than two.
\subsection{Packing Chromatic Number of the Corona Product of a Caterpillar with  $ K_1$}

Before studying the packing coloring gap of caterpillars with the corona operation,
we first investigate the packing chromatic number of these graphs.
Indeed, understanding how to efficiently color $T \odot K_1$ is essential in order
to control the effect of vertex deletion and, consequently, the value of the gap.

As mentioned in the discussion preceding Theorem~\ref{thm:caterpillar-gap}, optimal packing colorings for caterpillars typically rely on coloring
the central path using colors from the set $\{2,3,\ldots,7\}$.
However, after applying the corona operation, this strategy can no longer be applied
directly, since each vertex of the caterpillar receives an additional pendant neighbor.
This motivates the introduction of a more refined coloring strategy. A natural first approach would be to color the new pendant vertices with the color $2$.
However, this would prevent the use of color $2$ on the central path, and a similar
obstruction arises for color $3$.
To overcome this difficulty, we employ a coloring strategy that allows the reuse of
colors $2$ and $3$ both on the central path and pendant vertices, while respecting the distance
constraints.  We formalize this in the following proposition.
\begin{proposition}\label{prop:corona-caterpillar}
Let $T$ be a caterpillar. Then
\[
\chi_p(T \odot K_1) \le 7.
\]
\end{proposition}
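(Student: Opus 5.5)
The plan is to build an explicit $7$-packing coloring of $T\odot K_1$ by layers, according to distance from the central path $P=v_1\cdots v_\ell$. Every vertex of $T\odot K_1$ is at distance $0$, $1$ or $2$ from $P$. The distance-$1$ vertices are the leaves of $T$ together with the corona pendants of the spine vertices. The distance-$2$ vertices are the corona pendants $y$ attached to the leaves $x$ of $T$. The choice of layers is forced by the following observation. If $v_i$ has many leaves $x$, these leaves are pairwise at distance $2$, so all but a bounded number of them must get color $1$. Consequently $v_i$ itself cannot get color $1$, and the attached pendants $y$, which are pairwise at distance $4$, must get color $2$ or $3$. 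We may enlarge $T$ by giving every spine vertex many leaves, since this only adds vertices and cannot decrease $\chi_p$. So we will never use color $1$ on the spine, and we color as follows:
\begin{itemize}
\item every distance-$1$ vertex gets color $1$;
\item every distance-$2$ vertex $y$ hanging below $v_i$ gets color $2$, unless $c(v_i)=2$, in which case it gets color $3$;
\item the spine gets colors from $\{2,\dots,7\}$ according to a periodic sequence $\sigma$ chosen below.
\end{itemize}

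Next we check all constraints except those inside the spine. Color $1$: distance-$1$ vertices are pairwise at distance $2$ or more, and no spine vertex or $y$ uses color $1$. Color $2$: two $y$'s at the same $v_i$ are at distance $4$, and $y$'s at different $v_i,v_j$ are at distance $4+|i-j|$. A $y$ below $v_j$ and a spine vertex $v_i$ are at distance $2+|i-j|$, so the only dangerous case is $i=j$, which the rule excludes. Color $3$: $y$'s colored $3$ sit below spine vertices colored $2$, and such spine vertices are at least $3$ apart on $P$, so these $y$'s are at distance at least $7$ from each other. A $y$ colored $3$ below $v_i$ is at distance $2+|i-j|$ from $v_j$, so it conflicts with a spine vertex $v_j$ colored $3$ only when $|i-j|\le 1$. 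Since $c(v_i)=2\neq 3$, this forces exactly one extra requirement on $\sigma$: \emph{a spine vertex colored $2$ is never adjacent to a spine vertex colored $3$.}

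The remaining step, and the main obstacle, is to exhibit a periodic packing coloring $\sigma$ of the (infinite) path with colors $\{2,\dots,7\}$ in which $2$ and $3$ are never consecutive. Restricting $\sigma$ to $v_1,\dots,v_\ell$ then finishes the proof. A density count shows this is tight: using $2$ with period $3$ blocks $3$ entirely, and $1/5+1/6+1/7+1/8<2/3$. So $2$ must be used more sparsely, with $3$'s interleaved, while $4,\dots,7$ fill the remaining gaps. The total available density $\tfrac13+\tfrac14+\tfrac15+\tfrac16+\tfrac17+\tfrac18>1$ leaves some slack. I would first try to adapt Sloper's spine pattern for caterpillars, which the paper already recalls uses colors $2,\dots,7$, by local swaps around each $2$. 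If that fails, I would run a small computer or hand search for a periodic word of moderate period, in the range $20$--$40$, that satisfies the packing distances and the extra non-adjacency condition, and then verify it by listing, for each color $k$, the gaps between consecutive occurrences and checking that each exceeds $k$.

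If no such word exists, the fallback is to let the rule for the pendants $y$ vary more: for example, $y$ below $v_i$ gets color $3$ whenever some neighbor of $v_i$ is colored $2$. We would then re-derive the corresponding local condition on $\sigma$ and search again.
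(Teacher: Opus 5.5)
Your reduction is the same as the paper's: color every vertex at distance $1$ from the spine with $1$, color the corona pendants at distance $2$ with $2$ (or with $3$ when the spine vertex above them has color $2$), and put colors from $\{2,\dots,7\}$ on the spine. Your check of the off-spine constraints is correct. It also isolates precisely the condition the paper asserts without detailed verification: a spine vertex colored $2$ must never be adjacent to one colored $3$, because a color-$3$ pendant below $v_j$ is at distance $2+|i-j|$ from $v_i$.

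The genuine gap is that you never produce the spine word $\sigma$. Its existence is the entire combinatorial content of the proposition. You defer it to a prospective computer or hand search and even plan a fallback in case no such word exists. As written, therefore, your argument does not establish $\chi_p(T\odot K_1)\le 7$.

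The missing step is easy to supply, and it is exactly what the paper does: color the spine periodically by $(2\,4\,3\,5\,2\,6\,3\,4\,2\,5\,3\,7)^\ast$.
\begin{itemize}
\item Colors $2$ and $3$ occupy the odd positions, at positions $\equiv 1$ and $\equiv 3 \pmod 4$ respectively. So each recurs every $4$ steps, and a $2$ is never adjacent to a $3$.
\item On the even positions, colors $4$ and $5$ each recur every $6$ steps, and colors $6$ and $7$ each recur every $12$ steps.
\end{itemize}
Thus every color $k$ recurs at gaps exceeding $k$, and any finite window of this word is a valid spine coloring satisfying your side condition.

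Your density heuristic points to exactly this structure. Colors $2$ and $3$ each take density $\tfrac14$ on the odd positions, leaving density $\tfrac12=\tfrac16+\tfrac16+\tfrac1{12}+\tfrac1{12}$ for colors $4,\dots,7$. With this word inserted, your argument is complete and is a more carefully verified version of the paper's proof. The preliminary step of enlarging $T$ with extra leaves is harmless but unnecessary.
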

\begin{proof}
    Let $P=v_1v_2\cdots v_m$ denote the central path of the caterpillar $T$.
For each vertex $v \in V(P)$, we define:
\begin{itemize}
    \item $L_1(v)$ as the set of vertices at distance one from $v$ that do not belong to the central path.
    \item $L_2(v)$ as the set of vertices at distance two from $v$ that do not belong to the central path.
\end{itemize}
We further define
\[
L_1 = \bigcup_{v \in V(P)} L_1(v)
\quad \text{and} \quad
L_2 = \bigcup_{v \in V(P)} L_2(v).
\]

We use the colors $2$ and $3$ alternately on the central path; we never use them on consecutive vertices.
More precisely, if the central path is $v_1v_2\cdots v_m$, then we assign
\[
c(v_1)=2,\quad c(v_3)=3,\quad c(v_5)=2,\quad c(v_7)=3,\ \text{and so on}.
\]
All vertices in $L_2$ are colored with color $2$, except for those belonging to
$L_2(v_i)$ where $v_i$ receives color $2$ on the central path; in that case,
the vertices of $L_2(v_i)$ are colored with color $3$.
Since colors $2$ and $3$ are not used on consecutive vertices of the path, this ensures
that all distance constraints are satisfied.

The coloring of the central path is then completed by applying the following
repeating pattern along the entire path:
\[
(2\,4\,3\,5\,2\,6\,3\,4\,2\,5\,3\,7)^\ast.
\]
This completes the proof.
\end{proof}
\subsection{Packing Coloring Gap of the Corona Product of a Caterpillar with $K_1$}
Based on the bound established in the previous subsection, we are now prepared to study the packing coloring gap of
caterpillars with the corona operation.
\medskip
Before proving our result, we first establish the following
corollary, which will be useful in our analysis of such graphs.

\begin{corollary}\label{cor:chi3-structure}
Let $T$ be a caterpillar. Then
\[
\chi_p(T \odot K_1) = 3
\quad \text{if and only if} \quad
T \text{ is a star.}
\]
\end{corollary}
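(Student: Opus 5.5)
The plan is to prove both directions directly. The forward direction uses an explicit 3-coloring together with a $P_4$ lower bound. The converse uses monotonicity of $\chi_p$ under taking subgraphs together with a small forbidden configuration. Throughout I assume $T$ has at least two vertices, as the paper implicitly does. For $T=K_1$ we get $T\odot K_1=K_2$, whose packing chromatic number is $2$, so the statement needs this assumption.

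\emph{Stars give $3$.} Let $T=K_{1,n}$ with $n\ge 1$, center $c$ and leaves $u_1,\dots,u_n$. Let $c'$ be the pendant of $c$ and $w_i$ the pendant of $u_i$. I color $c$ with $2$, every $u_i$ and $c'$ with $1$, and every $w_i$ with $3$. The checks are as follows. The vertices colored $1$ are pairwise at distance $2$. Color $2$ is used only once. Any two $w_i,w_j$ are at distance $4>3$. Hence $\chi_p(T\odot K_1)\le 3$. For the lower bound, $T\odot K_1$ contains the path $w_1u_1cc'$, which is a $P_4$. Since a subgraph $H\subseteq G$ satisfies $d_H\ge d_G$, any packing coloring of $G$ restricts to one of $H$, so $\chi_p(H)\le \chi_p(G)$. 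With $\chi_p(P_4)=3$ this gives $\chi_p(T\odot K_1)\ge 3$.

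\emph{Non-stars give at least $4$.} If $T$ is not a star, it has diameter at least $3$, so it contains a path $v_1v_2v_3v_4$. Then $T\odot K_1$ contains the subgraph $H$ formed by this path together with the pendants $w_i$ of the $v_i$. By monotonicity, it suffices to show $\chi_p(H)\ge 4$. Suppose $c$ is a packing coloring of $H$ with colors $\{1,2,3\}$, and let $X_i$ be its color classes.

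First I bound $X_3$. Its vertices must be pairwise at distance at least $4$, and the only such pairs in $H$ are $\{w_1,w_4\}$, $\{w_1,v_4\}$ and $\{v_1,w_4\}$. Three pairwise-far vertices are therefore impossible, so $|X_3|\le 2$ and $X_3\subseteq\{v_1,w_1,v_4,w_4\}$.

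Next I bound $X_1$. Each edge $v_iw_i$ contains at most one vertex colored $1$, so $|X_1|\le 4$, and hence $|X_1|+|X_2|\ge 6$.

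Now I analyze $X_2$, whose vertices are pairwise at distance at least $3$. Two adjacent spine vertices cannot both lie in $X_2$. Moreover, $X_2$ contains at most one of $v_i,w_i$, and at most one vertex within distance $2$ of each spine vertex.

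The main case analysis focuses on the middle vertices $v_2,v_3$ and their pendants $w_2,w_3$. None of these four vertices can be colored $3$, so each receives color $1$ or $2$. Since $d(v_2,v_3)=1$, and $v_2,v_3$ are within distance $2$ of $w_3$ and $w_2$ respectively, the restrictions on colors $1$ and $2$ leave only a handful of assignments on $\{v_2,v_3,w_2,w_3\}$. Here I use that vertices colored $1$ are pairwise nonadjacent and vertices colored $2$ are pairwise at distance at least $3$.

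\begin{itemize}
\item If $v_2$ and $v_3$ are both colored $1$, they are adjacent, which is impossible.
\item If $v_2$ and $v_3$ are both colored $2$, they are at distance $1<3$, which is impossible.
\item So one of $v_2,v_3$ is colored $1$ and the other $2$, say $v_2$ colored $1$ and $v_3$ colored $2$. Then $w_2$, being adjacent to $v_2$, must be colored $2$, but $d(w_2,v_3)=2$, a contradiction.
\end{itemize}

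The symmetric case is identical. This contradiction shows $\chi_p(H)\ge 4$, completing the converse.

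I expect the main obstacle to be this last step, namely organizing the case check cleanly. A crude counting argument using only $|X_1|,|X_2|,|X_3|$ does not suffice, because the four pendants $w_1,\dots,w_4$ are pairwise at distance at least $3$ and so could all share color $2$. The key idea is to look at the middle edge $v_2v_3$ rather than count globally.
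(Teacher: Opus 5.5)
Your route differs from the paper's. The paper passes to $\chi_p(T)\in\{2,3\}$ and appeals to an external characterization of caterpillars with packing chromatic number $3$, checked family by family. You instead reduce to the single forbidden subgraph $H=P_4\odot K_1$, which is self-contained and works for every tree. The star direction and the reduction to $H$ are correct.

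However, the proof that $\chi_p(H)\ge 4$ has a genuine gap at the step ``$|X_3|\le 2$ and $X_3\subseteq\{v_1,w_1,v_4,w_4\}$''. First, your list of pairs at distance at least $4$ omits $\{w_1,w_3\}$ and $\{w_2,w_4\}$ (e.g.\ $w_1v_1v_2v_3w_3$ has length $4$). More fundamentally, a bound on $|X_3|$ says nothing about where a lone vertex of color $3$ sits. Nothing you have shown excludes $X_3=\{v_2\}$ or $X_3=\{w_1,w_3\}$. So the claim that none of $v_2,v_3,w_2,w_3$ can be colored $3$ is unproved. Your middle-edge analysis is correct as far as it goes, but it only rules out colorings in which color $3$ avoids the middle. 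The omitted cases are the substantive ones: when $c(v_2)=3$ or $c(v_3)=3$, the edge $v_2v_3$ yields no contradiction, and the obstruction only appears at the far end of the path. Your side remark that $X_2$ has at most one vertex within distance $2$ of each spine vertex is also false (take $w_1,w_3$ around $v_2$), though you never use it.

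The repair is short and replaces your $X_3$ argument entirely.
\begin{itemize}
\item Since the spine $v_1v_2v_3v_4$ is a $P_4$ and $\chi_p(P_4)=3$, some spine vertex $v_k$ has color $3$.
\item It is the only one, because the spine has diameter $3$. The other three spine vertices therefore use colors $1$ and $2$.
\item If $k=1$, then $v_2v_3v_4$ must be colored $1,2,1$. Now $w_2$ is adjacent to a vertex colored $1$, at distance $2$ from $v_3$ (color $2$), and at distance $2$ from $v_1$ (color $3$), so it cannot be colored.
\item If $k=2$, then $\{c(v_3),c(v_4)\}=\{1,2\}$.
  \begin{itemize}
  \item If $c(v_3)=1$, then $w_3$ is adjacent to a $1$, at distance $2$ from $v_4$ (color $2$), and at distance $2$ from $v_2$ (color $3$), so it is blocked.
  \item If $c(v_3)=2$ and $c(v_4)=1$, then $w_4$ is adjacent to a $1$, at distance $2$ from $v_3$, and at distance $3$ from $v_2$, so it is blocked.
  \end{itemize}
\item The cases $k=3,4$ are symmetric to $k=2,1$.
\end{itemize}
With this in place the argument is complete.
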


\begin{proof}
First, observe that the corona of a star graph with $K_1$ clearly has
packing chromatic number $3$.  

Now, let $G = T \odot K_1$ be a caterpillar corona with packing chromatic number
$\chi_p(G) = 3$. The underlying caterpillar $T$ must have packing chromatic number
$2$ or $3$, since it is trivially that $\chi_p(T) \le \chi_p(T \odot K_1)$.\\
Caterpillars with packing chromatic number $2$ are exactly the stars.
Caterpillars with packing chromatic number $3$ have been completely
characterized in~\cite{{Furmanczyk2025}}.
Upon checking all families of such caterpillars under the corona operation, we observe that none of them produces a graph with a packing chromatic number $3$. 
Indeed, in $T \odot K_1$, every vertex of $T$ that originally had zero leaves becomes adjacent to a new pendant vertex.
As a consequence, the caterpillar subgraph of $T \odot K_1$ no longer belongs to any of the graph families characterized in ~\cite{{Furmanczyk2025}}, forcing the packing chromatic number to exceed $3$. \\
Therefore, the only possibility is that $T$ is a star.
\end{proof}
We are now ready to prove our result regarding the gap of the corona product of a caterpillar with $K_1$. We prove the following:  \begin{theorem} \label{thm:corona-gap}
Let $T$ be a caterpillar.
Then the packing coloring gap of the corona graph $T \odot K_1$ satisfies
\[
\mu_p(T \odot K_1) \le 2.
\]  
\end{theorem}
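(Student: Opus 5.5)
We would follow the same overall strategy as in Theorem~\ref{thm:caterpillar-gap}. Write $G=T\odot K_1$ and $P=v_1\cdots v_m$ for the central path of $T$, so that $P$ is also a central path of the caterpillar-like tree $G$. As in that proof, we first reduce to deleting a vertex of $P$. Suppose $x$ is a pendant vertex of $G$ whose neighbour $y$ lies on $P$. Then $G-y$ is $G-x$ with $y$ removed, together with $x$ as an isolated vertex. Hence $\chi_p(G-x)\ge\chi_p(G-y)$. Similarly, if $x\in L_2(v)$ is a pendant neighbour of a leaf $u$ of $T$, then $G-u$ is, up to an isolated vertex, a subgraph of $G-x$. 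So from now on we fix $v=v_i\in V(P)$ and bound $\chi_p(G)-\chi_p(G-v)$.

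By Proposition~\ref{prop:corona-caterpillar}, $\chi_p(G)\le 7$. Also $G$ always contains $K_1\odot K_1=P_2$, and if $T$ has an edge, a $P_4$. We then split according to $\chi_p(G)$.

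\textbf{Case $\chi_p(G)\le 4$.} The gap is at least $3$ only if $\chi_p(G)=4$ and $\chi_p(G-v)=1$. But $G-v$ always contains the edge $uu'$, where $u$ is a neighbour of $v$ in $T$ and $u'$ is its pendant vertex. So the gap is at most $2$.

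\textbf{Case $\chi_p(G)=5$.} We must rule out $\chi_p(G-v)=2$, that is, every component of $G-v$ being a star (Proposition~\ref{prop:chi2}). A neighbour $u$ of $v$ in $T$ that is a leaf of $T$ gives a $K_2$ component. A neighbour $v_{i\pm1}$ on $P$ gives a star only if that side consists of $v_{i\pm1}$ with its $T$-leaves, each of which now carries a pendant vertex. Such a component contains $P_4$ as soon as $v_{i\pm1}$ has a $T$-leaf, and is otherwise a $K_2$. So the only possibility is $m\le 3$ with the neighbours of $v$ on $P$ having no $T$-leaves. In that situation $G$ is small enough to be $4$-colored explicitly, contradicting $\chi_p(G)=5$.

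\textbf{Cases $\chi_p(G)\in\{6,7\}$.} Suppose $\chi_p(G-v)=\chi_p(G)-3\in\{3,4\}$. Take an optimal packing coloring $c$ of $G-v$ with colours in $\{1,\dots,k\}$, $k=\chi_p(G)-3$. We then aim to extend it to $G$ using only two new colours $k+1$ and $k+2$, which gives a contradiction. Any conflict created by reinserting $v$ involves a pair consisting of one vertex on each side of $v$, at distance at most $k+1$ through $v$. There are also vertices of $G-v$ at distance $1$ from $v$, namely its pendant vertex $v'$, its $T$-leaves, and $v_{i\pm1}$, which constrain the colour of $v$ itself. The plan is as follows:
\begin{itemize}
\item give $v$ colour $k+2$;
\item on one side of $v$, recolour with colour $k+1$ a single vertex, chosen among $v_{i-1}$ and $v_{i-2}$, lying in every conflicting pair on that side.
\end{itemize}
Since $k+1$ and $k+2$ are new colours used only once each, this is legal if such a vertex exists.

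The main obstacle is exactly this last point: several conflicting pairs may not share a common vertex on one side. For example, colour $1$ on both $v'$-type pendants is harmless, but colours $2$ and $3$ can conflict simultaneously among the vertices at distance $1$ or $2$ from $v$, including pendant vertices of leaves. Our first attempt would be to exploit the freedom in $c$ before reinsertion. Colour $1$ on pendant vertices at distance $\le 1$ from $v$ never conflicts across $v$. Then a local recolouring swap on one side, such as exchanging the colours of the neighbourhood $N[v_{i-1}]$ in a way analogous to the subcases in Case~5 of Theorem~\ref{thm:caterpillar-gap}, makes all conflicting vertices on that side coincide in one vertex. Should that fail, we would instead enumerate the finitely many colour patterns on $v_{i-3},\dots,v_{i+3}$ and their pendant vertices and resolve each explicitly with the colours $k+1$ and $k+2$.
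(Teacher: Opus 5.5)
\textbf{Verdict: genuine gap.} Your reduction to spine vertices and your treatment of $\chi_p(G)\le 5$ are sound and follow the paper. The gap is the extension step for $\chi_p(G)\in\{6,7\}$. That is where the content of the theorem lies, and you leave it open.

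One small omission in the reduction: deleting a leaf $u$ of $T$ (degree $2$ in $G$) is covered by neither of your observations. You also need $\chi_p(G-y)\le\chi_p(G-u)$ for its spine neighbour $y$. This holds because $G-y$ is the disjoint union of $G-\{y,u,u'\}\subseteq G-u$ and the edge $uu'$, while $G-u$ contains an edge.

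A vertex has only one colour, so conflicting pairs of different colours are disjoint. Your scheme (a fresh colour on $v$ and one fresh colour on a single vertex) therefore works exactly when there is a single conflicting pair. First recolour $v'$ and the $T$-leaves of $v$ with $1$ and their pendants with $2$. After that:
\begin{itemize}
\item a colour-$2$ conflict forces $c(v_{i-1})=c(v_{i+1})=2$;
\item every colour-$3$ conflict has an endpoint at distance $1$ from $v$, i.e.\ $v_{i\pm1}$ of colour $3$, so colour-$2$ and colour-$3$ conflicts exclude each other;
\item there is at most one conflicting pair per colour, since the vertices of one side within distance $2$ (resp.\ $3$) of $v$ are pairwise at distance at most $2$ (resp.\ $4$).
\end{itemize}
So the obstacle you name, colours $2$ and $3$ conflicting at once, never occurs. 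For $k=3$ there is at most one pair, and your argument closes the case $\chi_p(G)=6$ once this is written out. The paper argues differently there: it uses Corollary~\ref{cor:chi3-structure} to force a spine of order at most $5$ and colours it explicitly with $3,4,2,5,3$.

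For $k=4$, however, a colour-$4$ pair can coexist with a colour-$2$ or colour-$3$ pair. An example is colours $4,2,\cdot,2,4$ on $v_{i-2},v_{i-1},v,v_{i+1},v_{i+2}$, where the pairs $\{v_{i-1},v_{i+1}\}$ and $\{v_{i-2},v_{i+2}\}$ share no vertex. Each fresh colour can be used only once in this region, so two of them settle only two of the three demands: colouring $v$, and breaking each of the two pairs. Some vertex, possibly $v$ itself, must then take an old colour, and whether that is possible depends on colours farther along the spine.

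This is the content of the paper's Case~4. It lists the two-pair spine patterns $4\,2\,v\,2\,4$, $4\,3\,v\,3\,4$, $3\,4\,v\,3\,4$ and $4\,3\,v\,4\,3$, and inspects $v_{i\pm3}$, $v_{i-4}$ and $v_{i-5}$. It uses the fact that the $L_1$-vertices must be colourable from $\{1,2,3,4\}$ to exclude some patterns. In one subcase it shows the spine has at most $9$ vertices and recolours all of it with $2\,4\,3\,5\,2\,6\,3\,4\,2$, which is not a local fix on $v_{i-3},\dots,v_{i+3}$.

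Your proposal stops where this analysis begins (``should that fail, we would enumerate''), so the case $\chi_p(G)=7$ is not proved. To complete it you must carry out such a case analysis. It must also cover pairs whose endpoints are leaves or pendant vertices rather than spine vertices.
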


\begin{proof}
The proof proceeds by analyzing the possible values of the packing chromatic number
of $G = T \odot K_1$ and showing that, in each case, the deletion of a vertex cannot
decrease the packing chromatic number by more than two.
\medskip
\noindent\textbf{Case 1: $\chi_p(G) \le 4$.}
In this case, the packing coloring gap satisfies $\mu_p(G) \le 2$, since deleting a vertex cannot decrease the packing chromatic number by more than two because otherwise the resulting graph would have the packing chromatic number $1$, which is possible only for a disjoint union of isolated vertices.

\medskip
\noindent\textbf{Case 2: $\chi_p(G) = 5$.}
Suppose, for contradiction, that there exists a vertex $v \in V(G)$ such that
$\chi_p(G-v) = 2$.
Then $G-v$ is a disjoint union of stars.
However, this implies that either $G$ is not of the form $T \odot K_1$ for any
caterpillar $T$, or that $\chi_p(G) \le 4$, both of which contradict the
assumption that $\chi_p(G) = 5$.
Hence, for every vertex $v \in V(G)$, we have $\chi_p(G-v) \ge 3$, and therefore the packing coloring gap satisfies $\mu_p(G) \le 2$.

\medskip
\noindent\textbf{Case 3: $\chi_p(G) = 6$.}
Suppose, to the contrary, that there exists a vertex $v \in V(G)$ such that
$\chi_p(G-v) = 3$.
By Corollary~\ref{cor:chi3-structure}, the graph $G - v$ must be a disjoint union of components, each having the structure of $S \odot K_1$, where $S$ is a star.

\begin{figure}[h]
\centering
\begin{tikzpicture}[
scale=1,
every node/.style={circle,draw,fill=black,inner sep=1.3pt},
vstyle/.style={circle,draw,fill=white,inner sep=2pt,thick}
]

\node[vstyle] (v) at (0,0) {$v$};
\draw[dotted] (v.north) -- ++(0,1cm);

\node[fill=black, circle, inner sep=1.5pt, label=above right:$v_1$] (cL) at (-3.5,0) {};
\node (newL) at (-5,0) {};
\node (a1L) at (-3.5,1.3) {};
\node (a2L) at (-2.3,0.7) {};
\node (a3L) at (-2.3,-0.7) {};
\node (a4L) at (-3.5,-1.3) {};
\node (a5L) at (-4.7,-0.7) {};
\node (a6L) at (-4.7,0.7) {};

\foreach \x in {newL,a1L,a2L,a3L,a4L,a5L,a6L}
  \draw (cL)--(\x);


\node (b1L) at (-3.5,2.2) {};
\node (b2L) at (-1.5,1.1) {};
\node (b3L) at (-1.5,-1.1) {};
\node (b4L) at (-3.5,-2.2) {};
\node (b5L) at (-5.5,-1.1) {};
\node (b6L) at (-5.5,1.1) {};

\foreach \i/\j in {a1L/b1L,a2L/b2L,a3L/b3L,a4L/b4L,a5L/b5L,a6L/b6L}
  \draw (\i)--(\j);

\draw (v)--(cL);

\node[fill=black, circle, inner sep=1.5pt, label=above right:$v_3$] (cR) at (3.5,0) {};
\node (new) at (5,0) {};
\node (a1R) at (3.5,1.3) {};
\node (a2R) at (4.7,0.7) {};
\node (a3R) at (4.7,-0.7) {};
\node (a4R) at (3.5,-1.3) {};
\node (a5R) at (2.3,-0.7) {};
\node (a6R) at (2.3,0.7) {};

\foreach \x in {new,a1R,a2R,a3R,a4R,a5R,a6R}
  \draw (cR)--(\x);

\node (b1R) at (3.5,2.2) {};
\node (b2R) at (5.5,1.1) {};
\node (b3R) at (5.5,-1.1) {};
\node (b4R) at (3.5,-2.2) {};
\node (b5R) at (1.5,-1.1) {};
\node (b6R) at (1.5,1.1) {};

\foreach \i/\j in {a1R/b1R,a2R/b2R,a3R/b3R,a4R/b4R,a5R/b5R,a6R/b6R}
  \draw (\i)--(\j);

\draw (v)--(cR);

\end{tikzpicture}
\caption{The graph obtained by reinserting the vertex $v$ to the components of $G-v$. The dotted line incident to $v$ indicates that $v$ may have additional leaves attached to it.}
\label{fig:extended-structure}
\end{figure}

By reinserting the vertex $v$, we either obtain the configuration illustrated in Figure~\ref{fig:extended-structure}, in which the central path has order three, namely $v_{1} v v_{3}$, and its vertices can be colored with $3,4,$ and $2$, respectively; or there exists a vertex together with its leaf between $v$ and the left component, and similarly between $v$ and the right component. In this case, the central path has order at most five and can be colored consecutively with the pattern $3,4,2,5,3$.

In both situations, the corresponding $L_{1}$-vertices are colored with $1$, while the $L_{2}$-vertices are colored according to the strategy defined in Proposition~\ref{prop:corona-caterpillar}: all vertices in $L_2$ receive color $2$, except those belonging to $L_2(v_i)$, where $v_i$ is colored with $2$ on the central path; in this case, the vertices of $L_2(v_i)$ are colored with $3$.
Hence, we obtain a valid packing coloring of $G$ using at most five colors, contradicting the assumption that $\chi_p(G)=6$.

Therefore, for every vertex $v \in V(G)$, we have $\chi_p(G-v)\ge 4$, and consequently $\mu_p(G)\le 2$.

\medskip
\noindent\textbf{Case 4: $\chi_p(G) = 7$.}
Suppose, for contradiction, that there exists a vertex $v \in V(G)$ such that $\chi_p(G-v) = 4.$\\
We reinsert the vertex $v$ and examine the conflicts that may occur when trying
to extend a packing coloring of $G-v$ to $G$. There are two types of conflicts:

\medskip
\noindent\emph{Type 1: Two vertices conflict with each other.}  
In this case, we can recolor one of the conflicting vertices with a new color $5$,
assign $v$ the new color $6$, and color the $L_1(v)$ and $L_2(v)$-vertices with $1$ and $2$ respectively. This produces a valid coloring of $G$ using only $6$ colors, contradicting $\chi_p(G) = 7$.

\medskip
\noindent\emph{Type 2: Four vertices conflict with each other.}  
The sequences of colors along the central path before and after $v$ can take one
of the following forms:
\[
4\,2\,v\,2\,4,\quad 4\,3\,v\,3\,4,\quad 3\,4\,v\,3\,4,\quad 43v43
\]
In what follows, we consider each conflict separately. We recolor $G$ under the restriction that colors $2$ and $3$ are not used consecutively on the central path. 

\smallskip
\noindent\textbf{Conflict 2.1: Sequence $4\,2\,v\,2\,4$.}  
\begin{itemize}
    \item If the sequence is $1\,4\,2\,v\,2\,4\,1$, then the vertex $v_{i-4}$ can only receive color 2 or 3:
    \begin{itemize}
        \item If $v_{i-4}$ is colored by $2$, then recolor the vertices from $v_{i-3}$ to $v_{i+3}$ by the sequence $1\,5\,3\,4\,2\,6\,1$.
         \item If $v_{i-4}$ is colored by $3$, then recolor the vertices from $v_{i-3}$ to $v_{i+3}$ by the sequence $1\,2\,5\,4\,2\,6\,1$.
    \end{itemize}

    \item If the sequence is $3\,4\,2\,v\,2\,4\,3$, then the vertices $v_{i-4}$ and $v_{i-5}$ can only receive the colors $1$ and $2$.  
    \begin{itemize}
        \item If $v_{i-5}v_{i-4}$ are colored $2\,1$, the $L_1$-vertices of $v_{i-4}$ cannot
      be colored from the set $\{1,2,3,4\}$, contradiction with the fact that $G-v$ is colored by the color set $\{1,2,3,4\}$ . 
      \item  If $v_{i-5}v_{i-4}$ are colored $1\,2$, the $L_1$-vertices of $v_{i-5}$ cannot
      be colored from the set $\{1,2,3,4\}$, a contradiction.
    \end{itemize} 
   Consequently, there exists at most one vertex on the central path before $v_{i-3}$ and
similarly at most one vertex after $v_{i+3}$. Hence, the central path contains at
most 9 vertices and can be colored using the sequence
$2\,4\,3\,5\,2\,6\,3\,4\,2$.
    \item If the sequence is $3\,4\,2\,v\,2\,4\,1$, then the vertices $v_{i-4}$ and $v_{i-5}$ can only receive colors $1$ and $2$:
    \begin{itemize}
        \item If $v_{i-4}$ receives the color $1$, then we recolor the vertices from $v_{i-3}$ to $v_{i+3}$ by the sequence $3\,2\,4\,5\,2\,6\,1$.
        \item If $v_{i-4}$ receives the color $2$, and $v_{i-5}$ receives the color $1$, then using the same arguments as in the previous case, we deduce that there exists at most one vertex before $v_{i-3}$ and it receives the color $2$, so we recolor the vertices from $v_{i-3}$ to $v_{i+3}$ by the sequence $4\,3\,5\,6\,2\,4\,1$.
    \end{itemize}
\end{itemize}

\smallskip
\noindent\textbf{Conflict 2.2: Sequence $4\,3\,v\,3\,4$.}  
\begin{itemize}
    \item If the sequence is colored by $2\,4\,3\,v\,3\,4\,2$, then recolor the vertices from $v_{i-3}$ to $v_{i+3}$ by $5\,2\,4\,3\,6\,2\,5$.
    \item If the sequence is colored by $2\,4\,3\,v\,3\,4\,1$, then recolor the vertices from $v_{i-3}$ to $v_{i+3}$ by $2\,4\,3\,6\,2\,5\,1$.
\end{itemize}

\smallskip
\noindent\textbf{Conflict 2.3: Sequence $3\,4\,v\,3\,4$.}\\  
Recolor the vertices from $v_{i-2}$ to $v_{i+2}$ by $3\,4\,2\,5\,6$.

\smallskip
\noindent\textbf{Conflict 2.4: Sequence $4\,3\,v\,4\,3$.}\\ 
Recolor the vertices from $v_{i-2}$ to $v_{i+2}$ by  $4\,5\,2\,6\,3$.

\medskip
In all cases, after recoloring the central path, we color the vertices in $L_1$ and $L_2$ following the strategy introduced in Proposition \ref{prop:corona-caterpillar} in which $L_2$ vertices are colored with color $2$, except for those belonging to
$L_2(v_i)$ where $v_i$ receives color $2$ on the central path; in that case, the vertices of $L_2(v_i)$ are colored with color $3$. After treating the conflicts, we obtain a valid coloring of $G$ using at most 6 colors,
contradicting the assumption that $\chi_p(G) = 7$. Therefore, no vertex $v \in V(G)$
can produce a decrease of 3 in the packing chromatic number, $\chi_p(G-v)$, and we conclude that
\[
\mu_p(G) \le 2.
\]

This completes the proof of Theorem~\ref{thm:corona-gap}.

\end{proof}
\section{Unbounded Packing Coloring Gap} \label{unboundedSection}

Cographs form a very wide class of graphs as they can be constructed
recursively from single-vertex graphs using disjoint union and join
operations. While this class may include graphs with small or bounded
packing coloring gaps, the recursive use of join operations can also
produce graphs where vertex deletions cause a large reduction in the
packing chromatic number. Indeed, we can construct explicit examples
of cographs with arbitrarily large packing coloring gaps.

\medskip
\noindent\textbf{Example.}  
Figure~\ref{fig:cograph-large-gap} illustrates a specific cograph constructed by performing a join operation of the disjoint union of $t$ edges with a single vertex $v$. The resulting graph consists of $t$ copies of $K_3$, each sharing the common central vertex $v$, as shown in the figure. This graph is known as the \emph{friendship graph}.

In this construction, we precisely determine the packing chromatic number and show that $\chi_p(G)=t+2$. Indeed, one may assign color $2$ to the central vertex $v$, and color $1$ to one vertex in each triangle. The remaining vertices that consist of one vertex of each triangle require $t$ additional colors, so $\chi_\rho(G)\leq t+2$. Now suppose that $G$ admits a packing coloring with $t+1$ colors. First, observe that the color $1$ can be assigned to at most $t$ vertices; indeed, in each triangle, at most one vertex can receive the color $1$, since any two vertices in the same triangle are at a distance at most $2$.
Thus, at least $t+1$ vertices must be colored with colors from $\{2,3,\dots,t+1\}$, which is a set of $t$ colors. So, there exist two vertices that receive the same color from this set. However, since the graph has diameter $2$ , then no two vertices can receive the same color $i$ for any $i \ge 2$.
Therefore, $\chi_p(G) \ge t+2$, and therefore $\chi_p(G) = t+2$.

On the other hand, removing the central vertex $v$ yields a disjoint union of edges, whose packing chromatic number is $2$. Therefore, the packing coloring gap is
\[
\chi_\rho(G) - \chi_\rho(G - v) = (t+2) - 2 = t.
\]
Consequently, the packing coloring gap can be arbitrarily large by increasing the number of triangles in the construction.
  
\begin{figure}[h]
\centering
\begin{tikzpicture}[scale=1.2, every node/.style={circle,draw,fill=black,inner sep=1.5pt}]
    \node (v0) at (0,0) {};

    \node (v1) at (2,0) {};
    \node (v2) at (1,1.5) {};
    \draw (v0) -- (v1) -- (v2) -- (v0);
    \node[draw=none,fill=none] at (1,0.6) {$T_2$};

    \node (v3) at (-2,0) {};
    \node (v4) at (-1,1.5) {};
    \draw (v0) -- (v3) -- (v4) -- (v0);
    \node[draw=none,fill=none] at (-1,0.6) {$T_1$};

    \node (v5) at (-0.5,-1.8) {};
    \node (v6) at (-1.7,-0.9) {};
    \draw (v0) -- (v5) -- (v6) -- (v0);
    \node[draw=none,fill=none] at (-0.7,-0.8) {$T_t$};
\draw[dash pattern=on 0pt off 3pt, line cap=round, line width=1.2pt] (v1) to[bend left] (v5);
    \node[draw=none,fill=none] at (0.3,-0.3) {$v$};
\end{tikzpicture}
\caption{A family of cographs with unbounded packing coloring gap.}

\label{fig:cograph-large-gap}
\end{figure}

This demonstrates that, although some cographs may have bounded gaps, in
general the packing coloring gap for cographs can be arbitrarily large.
Such examples motivate the restriction of our study to graphs with
small or bounded gaps, where structural analysis is feasible.

\medskip

This construction also provides other examples with unbounded packing coloring gap. A natural generalization of the friendship graph is given by windmill graph, which is obtained by taking $t$ copies of a complete graph $K_m$ and identifying one vertex in common. Indeed, let $G$ be a windmill graph. Arguing as above, one can assign color $2$ to the central vertex and color $1$ to one vertex in each copy of $K_m$. The remaining $t(m-2)$ vertices must receive distinct colors. Therefore, $\chi_\rho(G) = t(m-2) + 2$.

On the other hand, removing the central vertex yields a disjoint union of $t$ copies of $K_{m-1}$, whose packing chromatic number is $m-1$. Hence, the packing coloring gap is $\chi_\rho(G) - \chi_\rho(G - v) = \bigl(t(m-2) + 2\bigr) - (m-1) = t(m-2) - m + 3$ which can be made arbitrarily large as $t$ increases.
\section{Open Problems}
\label{sec:open}

The results obtained in this paper naturally motivate further questions
concerning the behavior of the packing coloring gap on graph classes whose
packing chromatic number is already well understood.

A particularly relevant class is that of \emph{lobsters}, that is, a tree in which the removal of all leaves and their incident edges results in a caterpillar. 
Equivalently, a lobster is a tree in which every vertex is at distance at most $2$ from a central path.
The packing chromatic number of lobsters was studied by Argiroffo, Nasini,
and Torres~\cite{Argiroffo2014}, who introduced a structural parameter $c_T$
defined as follows.
Let $T$ be a lobster with central path $P$.
For each vertex $v\in V(P)$, let $N_4(v)$ denote the set of neighbors of $v$
outside the central path that have degree at least four.
The parameter $c_T$ is then defined by
\[
c_T=\max_{v\in V(P)} |N_4(v)|.
\]
In~\cite{Argiroffo2014}, it was shown (Theorem~3.8) that if $c_T\le 1$, then the packing chromatic number of $T$ is bounded by a constant.
It is therefore natural
to ask whether a similar restriction also yields a small packing coloring gap.
In particular, one might expect that the condition $c_T\le 1$ forces
$\mu_p(T)$ to remain bounded by~$2$.
However, explicit examples indicate that this is not always
the case. Indeed, we found several lobsters $T$ that satisfy $c_T\le 1$ for
which $\mu_p(T)=3$.

This leads to the following open problem:

\begin{problem}
Is it true that $\mu_p(T)\le 3$ for every such
lobster?
\end{problem}

\begin{problem}
Determine how the parameter $c_T$ influences the packing coloring gap for larger values of $c_T$. In particular, establish whether additional structural restrictions on lobsters, or more generally on classes of trees, lead to sharper bounds on $\mu_p(G)$.
\end{problem}

In all the cases where we established a bounded $\mu_p(G)$, it was also the case that $\chi_p(G)$ is bounded. This naturally leads to the following open problem.

\medskip
\begin{problem}
Do there exist graph classes for which $\mu_p(G)$ remains bounded while $\chi_p(G)$ is unbounded? More generally, how can such classes be characterized or identified?
\end{problem}


\end{document}